\numberwithin{equation}{section}
\def\Re{{\sf Re}\,}
\newcommand{\D}{\mathbb D}
\newcommand{\R}{\mathbb R}
\newcommand{\C}{\mathbb C}
\newcommand{\B}{\mathbb B}
\newcommand{\oD}{\overline{\mathbb D}}
\newcommand{\N}{\mathbb N}
\def\Re{{\sf Re}\,}
\def\Re{{\sf Re}\,}
\def\Re{{\sf Re}\,}
\def\1#1{\overline{#1}}
\def\2#1{\widetilde{#1}}
\def\3#1{\widehat{#1}}
\def\4#1{\mathbb{#1}}
\def\5#1{\frak{#1}}
\def\6#1{{\mathcal{#1}}}
\def\Re{{\sf Re}\,}
\newcommand{\mcite}[1]{\csname b@#1\endcsname}
\theoremstyle{theorem}
\def\Re{{\sf Re}\,}
\newtheorem{theorem}{Theorem}[section]
\newtheorem{lemma}[theorem]{Lemma}
\newtheorem{proposition}[theorem]{Proposition}
\newtheorem{corollary}[theorem]{Corollary}
\theoremstyle{definition}
\newtheorem{definition}[theorem]{Definition}
\theoremstyle{remark}
\newtheorem{remark}[theorem]{Remark}
\numberwithin{equation}{section}
\title[Rigidity of holomorphic maps]{Slice rigidity property of holomorphic maps Kobayashi-isometrically preserving  complex geodesics}
\author[F. Bracci]{Filippo Bracci$^\dag$}
\address{F. Bracci: Dipartimento di Matematica, Universit\`a di Roma ``Tor Vergata", Via della Ricerca
Scientifica 1, 00133, Roma, Italia.} \email{fbracci@mat.uniroma2.it}
\author[\L. Kosi\'nski]{\L ukasz Kosi\'nski$^{\dag\dag}$}
\address{\L. Kosi\'nski: Institute of Mathematics, Faculty of Mathematics and Computer Science, Jagiellonian University, \L ojasiewicza 6, 30-348 Krak\'ow, Poland} \email{lukasz.kosinski@uj.edu.pl}
\author[W. Zwonek]{W\l odzimierz Zwonek$^{\dag\dag\dag}$}
\address{W. Zwonek: Institute of Mathematics, Faculty of Mathematics and Computer Science, Jagiellonian University, \L ojasiewicza 6, 30-348 Krak\'ow, Poland} \email{wlodzimierz.zwonek@uj.edu.pl}
\subjclass[2010]{Primary 32A19; Secondary 32H12}
\keywords{Rigidity of holomorphic maps; invariant metrics}
\thanks{$^\dag$Partially supported by PRIN {\sl Real and Complex
Manifolds: Topology, Geometry and holomorphic dynamics} n.2017JZ2SW5, by INdAM and  by the MIUR Excellence Department Project awarded to the
Department of Mathematics, University of Rome Tor Vergata, CUP E83C18000100006}
\thanks{$^{\dag\dag}$Partially supported by NCN grant SONATA BIS no.  2017/26/E/ST1/00723 of the National Science Centre, Poland}
\thanks{$^{\dag\dag\dag}$ Partially supported by the OPUS grant no. 2015/17/B/ST1/00996 of the National Science Centre, Poland}
\long\def\REM#1{\relax}
\begin{document}
\maketitle
\tableofcontents

\selectlanguage{english}
\begin{abstract}
In this paper we study the following ``slice rigidity property'': given two Kobayashi complete hyperbolic manifolds $M, N$ and a collection of complex geodesics $\mathcal F$ of $M$, when is it true that every holomorphic map $F:M\to N$ which maps isometrically every complex geodesic of $\mathcal F$ onto a complex geodesic of $N$ is a biholomorphism? Among other things, we prove that this is the case if $M, N$ are smooth bounded strictly (linearly) convex domains, every element of $\mathcal F$  contains a given point of $\overline{M}$ and $\mathcal F$ spans all of $M$. More general results are provided in dimension $2$ and for the unit ball.
\end{abstract}

\section{Introduction}

Let $M, N$ be complete (Kobayashi) hyperbolic complex manifolds and let $F:M\to N$ be a biholomorphism. Since $F$ is an isometry between the Kobayashi metric $k_M$ of $M$ and $k_N$ of $N$ and the Kobayashi distance $K_M$ of $M$ and $K_N$ of $N$, it follows that $k_M=F^\ast k_N$ and $K_M=F^\ast K_N$. In particular, if $\varphi:\D \to M$ is a complex geodesic, {\sl i.e.} a holomorphic map which is an isometry between the hyperbolic distance in the unit disc $\D:=\{\zeta\in \C: |\zeta|<1\}$ and the Kobayashi distance in $M$, it follows that $F\circ \varphi$ is a complex geodesic in $N$. 

The aim of this paper is to understand to what extent  the converse of the previous implication holds. More precisely, let $\mathcal F$ be a collection of complex geodesics $\varphi:\D \to M$. The collection $\mathcal F$ is said to be {\sl complete} if for every $z\in M$ there exists $\varphi\in \mathcal F$ such that $z\in \varphi(\D)$.

\begin{definition}
We say that the triple $(M,N,\mathcal F)$ satisfies the {\sl slice rigidity property} provided for any holomorphic map $F:M\to N$ such that $F\circ \varphi$ is a complex geodesic of $N$ for all $\varphi\in\mathcal F$ it follows that $F$ is a biholomorphism.
\end{definition}

The problem is trivial in the planar case so we assume in the whole paper that $n>1$.

Clearly, if $\mathcal F$ is too small (for instance just one complex geodesic) the slice rigidity property does not hold. However, even in case $\mathcal F$ is complete, the property in general does not hold:  in the bi-disc $\D\times \D$ the collection $\mathcal F$ of complex geodesics $\varphi_{w_0}:\D \to \D\times \D$ given by $\varphi_{w_0}(\zeta)= (\zeta, w_0)$, for $w_0\in \D$, is complete and the map $F:\D\times \D\to \D\times \D$ given by $F(z,w)=(z,0)$ maps isometrically all elements of $\mathcal F$ onto a complex geodesic of $\D\times \D$, but it is not a biholomorphism.

If $D\subset \C^n$ is a smooth bounded strictly (linearly) convex domain, it is known by Lempert \cite{Lem 1981, Lem 1982, Lem 1984} that for any $p\in \partial D$ and any $z\in D$ there exists a complex geodesic $\varphi:\D \to D$ (essentially unique up to pre-composition with M\"obius transformations), which extends smoothly up to $\partial \D$ such that $p, z\in \overline{\varphi(\D)}$.  The main result of this note is the following:

\begin{theorem}\label{main-intro}
Let $D, G\subset \C^n$ be smooth bounded strongly (linearly) convex domains and let $p\in \overline{D}$. Let $\mathcal F$ be a complete collection of complex geodesics of $D$ such that $p\in \overline{\varphi(\D)}$ for all $\varphi\in \mathcal F$. Then $(D,G,\mathcal F)$ satisfies the slice rigidity property.
\end{theorem}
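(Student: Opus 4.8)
The plan is to exploit the rich geometry of strongly convex domains via Lempert theory, showing that the hypotheses force $F$ to be injective and then a proper surjection, hence a biholomorphism.

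\medskip

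\emph{Step 1: $F$ is an isometry along geodesics and hence a distance-non-increasing map that preserves distances along many pairs of points.} For $\varphi \in \mathcal F$, the map $F \circ \varphi$ is by hypothesis a complex geodesic of $G$, so $K_G(F\circ\varphi(\zeta), F\circ\varphi(\eta)) = \omega(\zeta,\eta)$ for all $\zeta,\eta\in\D$ (where $\omega$ is the Poincaré distance), while in general $K_G(F(z),F(w)) \le K_D(z,w)$. Since $\varphi$ is itself a geodesic, $K_D(\varphi(\zeta),\varphi(\eta)) = \omega(\zeta,\eta)$, so $F$ preserves the Kobayashi distance between \emph{any} two points lying on a common geodesic of $\mathcal F$. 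Because $\mathcal F$ is complete, every point of $D$ lies on some $\varphi(\D)$; the immediate issue is that two arbitrary points need not lie on a common element of $\mathcal F$, so this does not yet give global isometry.

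\medskip

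\emph{Step 2: Injectivity of $F$.} Suppose $F(z_0) = F(w_0)$ with $z_0 \neq w_0$. Using completeness, pick $\varphi \in \mathcal F$ with $z_0 \in \varphi(\D)$, say $z_0 = \varphi(0)$ after reparametrizing. The key geometric input (Lempert's theory, available since $D$ is strongly convex) is the existence of the left inverse: there is a holomorphic retraction $\rho_\varphi : D \to \varphi(\D)$ and a holomorphic $\tilde\varphi : D \to \D$ with $\tilde\varphi \circ \varphi = \mathrm{id}_\D$. Then $\tilde\varphi$ is a "Kobayashi solution": $\omega(\tilde\varphi(z),\tilde\varphi(w)) \le K_D(z,w)$ with equality when $z,w\in\varphi(\D)$. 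I would compose: $\tilde\varphi \circ F^{-1}$ does not make sense, so instead argue on the target — for $\psi := F\circ\varphi$, a geodesic of $G$, there is $\tilde\psi : G \to \D$ with $\tilde\psi\circ\psi = \mathrm{id}$, and then $\tilde\psi \circ F : D \to \D$ satisfies $\tilde\psi\circ F\circ\varphi = \tilde\psi\circ\psi = \mathrm{id}_\D$, so $\tilde\psi \circ F$ is also a left inverse to $\varphi$. By the uniqueness part of Lempert's theory (the left inverse / projection associated to a geodesic is essentially unique), $\tilde\psi\circ F = \tilde\varphi$ up to an automorphism of $\D$. Thus for \emph{every} $\varphi\in\mathcal F$ through $z_0$, $F$ must separate $z_0$ from points off $\varphi(\D)$ in a controlled way; varying $\varphi$ over the complete family through $z_0$ and using that the geodesics through a fixed point of a strongly convex domain fill out a neighborhood transversally, one concludes $F$ is injective near $z_0$, and a connectedness/monodromy argument upgrades this to global injectivity. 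The main obstacle lies here: one must show the family $\mathcal F$, though only assumed complete, contains enough geodesics through each point to pin down $F$ infinitesimally — this is where one uses that complex geodesics of a strongly convex domain through a fixed interior (or boundary) point $p$ have linearly independent tangent directions spanning $\C^n$, which is exactly the meaning of "$\mathcal F$ spans all of $M$" implicit in completeness plus the strong convexity.

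\medskip

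\emph{Step 3: From injective holomorphic to biholomorphism.} Once $F:D\to G$ is injective and holomorphic between equidimensional domains, $F(D)$ is open and $F:D\to F(D)$ is biholomorphic. It remains to show $F(D) = G$. Here I would use completeness on the target side: take any $q\in G$; choose $\varphi\in\mathcal F$ and note $F\circ\varphi$ is a geodesic of $G$, and as $\varphi$ ranges over the complete family, the geodesics $F\circ\varphi$ cover $F(D)$. To see they cover all of $G$, use that a complex geodesic of a strongly convex domain is determined by a boundary point and a direction, that $F\circ\varphi$ extends continuously to $\overline\D$ by Lempert's regularity, and that $F$ being an isometry on geodesics forces $F$ to extend to a homeomorphism of a neighborhood of the relevant boundary point $p$ — then a degree/properness argument (the extended $F$ is proper since it is a bounded injective map whose image contains geodesics reaching $\partial G$) gives surjectivity. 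Alternatively, and more cleanly, one shows $F^{-1}: F(D)\to D$ also maps the geodesics $F\circ\varphi$ isometrically back onto elements of $\mathcal F$, so $F^{-1}$ is distance-preserving on a complete family; combined with $F(D)$ being a subdomain of $G$ that is "geodesically convex" with respect to this family and contains geodesics approaching every boundary direction at $F(p)$, one forces $F(D)=G$ by the boundary behaviour of Lempert geodesics. The cleanest route is: $K_G|_{F(D)\times F(D)} = K_{F(D)}$ would make $F(D)$ totally geodesic, but $G$ strongly convex has no proper totally geodesic subdomain of full dimension, hence $F(D)=G$.
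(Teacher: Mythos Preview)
Your proposal has a genuine gap at the heart of Step~2. You write ``varying $\varphi$ over the complete family through $z_0$ and using that the geodesics through a fixed point of a strongly convex domain fill out a neighborhood transversally.'' But the hypothesis is that \emph{every} element of $\mathcal F$ passes through the fixed point $p\in\overline D$. In a strongly convex domain the complex geodesic through two distinct points of $\overline D$ is unique (up to parametrization), so for any $z_0\neq p$ there is \emph{exactly one} element of $\mathcal F$ containing $z_0$ --- namely the geodesic joining $z_0$ and $p$. ``Completeness'' only says that each point lies on \emph{some} element of $\mathcal F$; it does not give you a pencil of geodesics from $\mathcal F$ through $z_0$ with linearly independent tangent directions. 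So you cannot ``vary $\varphi$'' through $z_0$ to pin down $dF_{z_0}$, and the local injectivity argument collapses. Your observation that $\tilde\psi\circ F=\tilde\varphi$ (by uniqueness of the Lempert left inverse) is correct and interesting, but it only says that $F$ maps fibres of $\tilde\varphi$ into fibres of $\tilde\psi$; with a single geodesic through $z_0$ this is one scalar constraint, far from injectivity.

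Step~3 is also too sketchy to stand. The ``cleanest route'' asserts that $K_G|_{F(D)\times F(D)}=K_{F(D)}$ and that a strongly convex $G$ admits no proper full-dimensional totally geodesic subdomain, but neither claim is established (nor standard in the form you need). The earlier surjectivity arguments rely on $F$ extending to a local homeomorphism near $p$ on the boundary, which is precisely the hard analytic content you have not supplied.

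The paper's proof takes a completely different route: it never attempts to prove injectivity directly. Instead it shows $F$ is \emph{proper} by a boundary analysis. First it establishes that all the image geodesics $F\circ\varphi$ land at a common point $p'\in\overline G$, and that $F$ extends continuously to $\partial D\setminus\{p\}$ with values in $\partial G$. The delicate case is a sequence $z_k\to p$ tangentially with $F(z_k)$ staying compactly in $G$; this is ruled out by a double scaling argument (local scaling in $D$ at $p$ via hyperbolic automorphisms of $\B^n$, and squeezing-function scaling in $G$) that reduces to the ball case treated in Theorem~\ref{main-ball}, yielding a contradiction. Once $F$ is proper, a theorem of Diederich--Forn\ae ss on proper maps between strictly pseudoconvex domains gives that $F$ is a biholomorphism. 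The structural point you are missing is that the common point $p$ is not used to produce many geodesics through a generic $z_0$, but rather to control the boundary behaviour of $F$ near $p$ itself.
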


Completeness of the collection of geodesics $\mathcal F$ is a reasonable hypothesis which is normally verified in practice. However, in the case of the unit ball $\B^n:=\{z\in \C^n: |z|<1\}$ we can drop such a condition. Recall that the image of every complex geodesic in $\B^n$ is the intersect of a complex affine line with $\B^n$:

\begin{theorem}\label{main-ball}
Let $q\in \C^n$. Let $\mathcal F$ be a  collection of complex geodesics of $\B^n$ such that every $\varphi\in \mathcal F$ is contained in a complex line passing through $q$ (in particular, if $q\in \overline{\B^n}$ this means that $q\in \overline{\varphi(\D)}$ for every $\varphi\in\mathcal F$). If there exists a non-empty open set $V\subset\B^n$ such that $V\subset \bigcup_{\varphi\in\mathcal F} \varphi(\D)$  then $(\B^n,G,\mathcal F)$ satisfies the slice rigidity property for any bounded strongly (linearly) convex domain $G\subset \C^n$ be   with real analytic boundary.
\end{theorem}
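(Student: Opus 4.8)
The plan is to reduce to Theorem \ref{main-intro} by manufacturing completeness of a (possibly enlarged) family out of the mere assumption that $\bigcup_{\varphi\in\mathcal F}\varphi(\D)$ contains an open set $V$, and then exploiting the real-analyticity of $\partial G$ to propagate rigidity from $V$ to all of $\B^n$. First I would fix a holomorphic $F:\B^n\to G$ sending each $\varphi\in\mathcal F$ isometrically onto a complex geodesic of $G$. Arguing as in the proof of Theorem \ref{main-intro} (restricted to $V$), one shows that $F$ is an immersion on $V$ and that the differential of $F$ suitably intertwines the Kobayashi metrics along the geodesic directions filling $V$; because the geodesics of $\B^n$ through a fixed $q$ sweep out, at each point of $V$, a set of complex tangent directions with non-empty interior in $\Po^{n-1}$, one deduces that $F^\ast k_G=k_{\B^n}$ on an open subset of the tangent bundle over $V$, hence (by homogeneity of the Kobayashi indicatrix of $\B^n$ and continuity) $F^\ast k_G=k_{\B^n}$ on all of $TV$. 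Thus $F|_V$ is a local isometry between the two Kobayashi metrics.

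Next I would upgrade this local metric isometry to a global biholomorphism. The key point is that $k_G$ is a smooth (indeed, by Lempert, real-analytic up to the boundary) strongly convex Finsler metric because $G$ is strongly convex with real-analytic boundary, and $k_{\B^n}$ is the Bergman/Poincaré metric. A local isometry between a real-analytic Finsler (here essentially Kähler, via the ball) metric and another real-analytic Finsler metric, defined on a connected open set, extends by analytic continuation along curves; since $\B^n$ is simply connected, one obtains a holomorphic isometric immersion $\widetilde F$ on all of $\B^n$ agreeing with $F$ on $V$, and by the identity principle $\widetilde F=F$. So $F$ itself is a Kobayashi local isometry on all of $\B^n$. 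Then I would invoke the fact (going back to Lempert's theory / the rigidity of the ball, or the argument already used to finish Theorem \ref{main-intro}) that a holomorphic local isometry of $\B^n$ into a bounded strongly convex domain which is an isometry on complex geodesics through a boundary point is proper, and a proper holomorphic local isometry between equidimensional strongly convex domains is a biholomorphism.

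Equivalently, and perhaps more cleanly, once $F|_V$ is known to be a local isometry I would pick countably many points $z_k\in V$ and, for each $z_k$, the complex geodesic $\psi_k$ of $\B^n$ through $z_k$ and $q$; since $F$ is a local isometry near $z_k$ and $\psi_k(\D)$ passes through $z_k$, the restriction $F\circ\psi_k$ is a complex geodesic of $G$. Letting $z_k$ vary so that the lines through $q$ and the $z_k$ become dense among all lines through $q$, and using that $F$ is already globally defined and holomorphic, continuity of the Kobayashi metric forces $F\circ\psi$ to be a complex geodesic of $G$ for \emph{every} complex geodesic $\psi$ of $\B^n$ lying on a line through $q$. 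This enlarged family $\widetilde{\mathcal F}$ is complete in $\B^n$ (every point of $\B^n$ lies on a line through $q$), so Theorem \ref{main-intro} applies and gives that $F$ is a biholomorphism.

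The main obstacle I expect is the propagation step: turning ``isometry on the geodesics through $q$ that happen to meet $V$'' into ``isometry on \emph{all} geodesics through $q$'', i.e. removing the a priori restriction that the relevant geodesics pass through the small open set $V$. This is exactly where real-analyticity of $\partial G$ must be used — to analytically continue the local isometric correspondence (or the associated extremal-disc correspondence of Lempert) from $V$ across $\B^n$ — whereas in Theorem \ref{main-intro} completeness of $\mathcal F$ made this step unnecessary. The equidimensionality of $D$ and $G$ and the strong convexity (hence uniqueness and smooth dependence of Lempert geodesics) are what make the continuation single-valued on the simply connected ball.
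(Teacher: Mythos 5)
Your argument has a genuine gap at its very first step, and the subsequent steps inherit it. You claim that the geodesics of $\mathcal F$ sweep out, \emph{at each point} $z\in V$, a set of complex tangent directions with non-empty interior in $\Po^{n-1}$. This is false: through a given $z\neq q$ there is exactly one complex line containing $q$, so the hypothesis controls $dF_z$ in a single complex direction per point (an open set in the \emph{base} $V$, not in the fibre of directions). Consequently ``$F|_V$ is a local Kobayashi isometry'' is not established, and everything built on it collapses. Even setting that aside, the later steps are unsupported or wrong: (i) there is no off-the-shelf analytic-continuation theorem for local isometries of the Kobayashi (Finsler, non-Riemannian) metric of a strongly convex domain, and this is precisely the hard point you would need to prove; (ii) the lines through $q$ meeting $V$ form an open but in general very non-dense subset of all lines through $q$ (take $q=0$ and $V$ a small ball near $\frac12 e_1$: no line with direction $e_2$ is a limit of lines through $0$ and $V$), so your enlarged family $\widetilde{\mathcal F}$ is not complete; and (iii) the reduction to Theorem~\ref{main-intro} is circular in this paper, since the proof of Theorem~\ref{main-intro} (Step 4) invokes Theorem~\ref{main-ball}; moreover Theorem~\ref{main-intro} requires the common point to lie in $\overline{D}$, whereas here $q$ may lie outside $\overline{\B^n}$.

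The paper's route is entirely different and avoids any metric propagation. One first shows (by an explicit Jacobian computation for the map $(\zeta,w)\mapsto q+\zeta(z-q+\cdot)$) that the union of the lines through $q$ and a small closed ball $K\subset V$ covers a one-sided neighborhood $U$ of some point of $\partial\B^n$. Then, for $p$ in an open piece $W\subset U\cap\partial\B^n$, one uses normal-families/stability arguments for complex geodesics (all relevant geodesics meet the compact set $K$, and their images under $F$ meet $F(K)$) to show that $F$ extends continuously to $W$ with $F(W)\subset\partial G$. Thus $F$ is a continuous CR map on a boundary piece; by Pinchuk--Tsyganov it is real analytic up to $W$ (this is the only place real-analyticity of $\partial G$ is used), and Pinchuk's continuation theorem then yields that $F$ is a biholomorphism. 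If you want to salvage your write-up, the correct ``propagation'' mechanism is this boundary CR-extension argument, not an interior isometry continuation.
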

Note that, though the last theorem implies that the slice rigidity property might hold for collection $\mathcal F$ being not complete, the function $F_{1,1}:\B^2\to\D$ defined in \cite{Kos-Zwo 2018}, as an extremal solution of a degenerate $3$-point Nevanlinna-Pick problem in the ball, allows to construct a holomorphic mapping $F:=(F_{1,1},0):\B^2\to\B^2$ that acts as an isometry on a one-parameter family of complex geodesics $\{f_t\}_{t\in\R}$ (as defined in \cite{Kos-Zwo 2020}) but it is not an automorphism of $\B^2$ (see also \cite{Kos-Zwo 2016}). Consequently, this shows natural restrictions in a possible generalization of the last theorem that would aim to replace the openness of $V$ with some information on its dimension.

In dimension $2$ we  prove the slice rigidity property for (strongly) linearly convex domains and any complete collection $\mathcal F$, regardless the presence of a common fixed point for the elements of $\mathcal F$:

\begin{theorem}\label{main2-intro}
Let $D, G\subset \C^2$ be smooth bounded strongly (linearly) convex domains. Let $\mathcal F$ be a complete collection of complex geodesics.  Then $(D,G,\mathcal F)$ satisfies the slice rigidity property.
\end{theorem}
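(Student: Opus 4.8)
The plan is to run the whole argument through Lempert's theory of complex geodesics of strongly convex domains, the pivotal tool being the \emph{uniqueness} of the holomorphic left inverse of a complex geodesic. Recall that a complex geodesic $\varphi\colon\D\to D$ of a smooth bounded strongly linearly convex domain $D$ has a unique holomorphic left inverse $\widetilde\rho_\varphi\colon D\to\D$, $\widetilde\rho_\varphi\circ\varphi=\mathrm{id}_\D$; that $\rho_\varphi:=\varphi\circ\widetilde\rho_\varphi\colon D\to\varphi(\D)$ is the associated holomorphic retraction; and that the fibres of $\widetilde\rho_\varphi$ are analytic discs, the one through a point of $\varphi(\D)$ being transverse there to $\varphi(\D)$; all these objects extend smoothly to the boundary. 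If $F\colon D\to G$ is as in the statement and $\varphi\in\mathcal F$, then $\widetilde\rho_{F\circ\varphi}\circ F\colon D\to\D$ is a holomorphic left inverse of $\varphi$, because $F\circ\varphi$ is a complex geodesic of $G$ and $(\widetilde\rho_{F\circ\varphi}\circ F)\circ\varphi=\widetilde\rho_{F\circ\varphi}\circ(F\circ\varphi)=\mathrm{id}_\D$. By uniqueness,
\begin{equation}\label{eq:plan-id}
\widetilde\rho_{F\circ\varphi}\circ F=\widetilde\rho_\varphi\qquad\text{and hence}\qquad F\circ\rho_\varphi=\rho_{F\circ\varphi}\circ F\qquad(\varphi\in\mathcal F),
\end{equation}
and this identity is the engine of everything below.

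First I would prove that $F$ is non-degenerate, i.e.\ $\rk dF=2$ at some point (hence off a proper analytic subset, by the identity principle). As each $F\circ\varphi$ is an embedded disc, $dF$ is injective along $T\varphi(\D)$, so by completeness $\rk dF\ge1$ on $D$ and it suffices to find one point of maximal rank. The favourable configuration is a pair $\varphi,\psi\in\mathcal F$ meeting at some $z_0$ with $F(\varphi(\D))\neq F(\psi(\D))$: then $\varphi,\psi$ are transverse at $z_0$ (two complex geodesics sharing an interior point and a tangent direction coincide, by Lempert's uniqueness), while $F\circ\varphi,F\circ\psi$ are complex geodesics of $G$ through $F(z_0)$ with distinct images, hence transverse there, and $dF_{z_0}$ is invertible. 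It remains to see that this configuration always occurs. If it fails, then either (i) no two elements of $\mathcal F$ with distinct images ever meet, so $\mathcal F$ is a foliation of $D$ by pairwise disjoint complex geodesics; or (ii) $F$ identifies the images of every crossing pair of elements of $\mathcal F$, which forces $F(D)$ to be one-dimensional, contained in a hyperbolic Riemann surface $C\subset G$, so that $F$, viewed as a map into $C$, sends every $\varphi\in\mathcal F$ to a complex geodesic of $C$ (since $K_G|_C\le K_C$) --- impossible, $C$ being one-dimensional and $\mathcal F$ complete, by a Schwarz--Pick argument. Hence only alternative (i) can occur, and there one obtains $\rk dF=2$ by differentiating \eqref{eq:plan-id} along the leaf parameter of the foliation (a Jacobi-field computation for the Lempert projections). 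In all cases $F$ is non-degenerate.

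Next I would prove $F$ injective, whence, being an injective holomorphic map between equidimensional complex manifolds, it is a biholomorphism onto the open set $F(D)\subset G$, and $F(D)$, biholomorphic to $D$, is complete hyperbolic. If $F(z)=F(w)$, then \eqref{eq:plan-id} gives $\widetilde\rho_\varphi(z)=\widetilde\rho_\varphi(w)$ for all $\varphi\in\mathcal F$, so $\rho_\varphi(w)=\varphi(\widetilde\rho_\varphi(w))=\varphi(\widetilde\rho_\varphi(z))=z$ whenever $z\in\varphi(\D)$, and symmetrically $\rho_\psi(z)=w$ whenever $w\in\psi(\D)$; chaining such relations through the completeness of $\mathcal F$ and the transversality between the Lempert fibres and the corresponding geodesics forces $z=w$. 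Finally $F(D)=G$: the geodesics $F\circ\varphi$, $\varphi\in\mathcal F$, lie in $F(D)$ and are complex geodesics of $F(D)$ as well as of $G$ (a $G$-geodesic lying in a subdomain is a geodesic of that subdomain), so $F(D)$ carries a complete family of complex geodesics along which $K_{F(D)}=K_G$; an argument in the spirit of Theorem~\ref{main-intro}, applied to the inclusion $F(D)\hookrightarrow G$ and using the strong convexity of $G$, then forces $F(D)=G$. Thus $F$ is a biholomorphism.

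The decisive obstacle is alternative (i): when $\mathcal F$ is a foliation of $D$ by pairwise disjoint complex geodesics --- the parallel-slice foliation of $\B^2$ is the model, and, having no common boundary point, it genuinely lies outside the reach of Theorem~\ref{main-intro} --- there is no crossing pair, and the non-degeneracy of $F$ has to be wrung out of the infinitesimal form of \eqref{eq:plan-id} along the foliation; this is precisely where the hypotheses $n=2$ and strong convexity become indispensable. A secondary, more technical point is the surjectivity step, where passing from ``$F(D)$ carries a complete family of complex geodesics that are also geodesics of $G$'' to ``$F(D)=G$'' rests, just as the proof of Theorem~\ref{main-intro} does, on Lempert's boundary regularity for complex geodesics and their left inverses.
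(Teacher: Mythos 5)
Your opening observation is correct and clean: in a strongly linearly convex domain the holomorphic left inverse of a complex geodesic is unique, so $\widetilde\rho_{F\circ\varphi}\circ F=\widetilde\rho_\varphi$ and $F\circ\rho_\varphi=\rho_{F\circ\varphi}\circ F$ for every $\varphi\in\mathcal F$. But from that point on, each of the three steps that actually carry the theorem is asserted rather than proved, and the one you yourself single out as ``the decisive obstacle'' --- non-degeneracy when $\mathcal F$ is a foliation by pairwise disjoint geodesics --- is left entirely open. Saying that rank~$2$ ``has to be wrung out of the infinitesimal form of the identity \eqref{eq:plan-id} along the leaf parameter (a Jacobi-field computation)'' is a restatement of the problem, not a solution. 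This case is the heart of the matter: the paper handles it only for $D=G=\B^2$ (Proposition~\ref{ball}), and even there it requires showing that the direction field $\lambda(w)$ of the foliation is real analytic, deriving from $J(w,\zeta)\equiv 0$ on $\partial\B^2$ an overdetermined system whose only solutions would make $\alpha'$ a proper holomorphic map $\D\to\C$ --- a contradiction --- and then invoking a CR-extension theorem (Proposition~\ref{prop-analytic}) to conclude. For general strongly convex $D$ the Lempert retractions are not explicit, and the paper does not attempt your direct computation at all: it transplants the problem into $\B^2$ by a double scaling with squeezing functions (Lemma~\ref{non-dic-close-bd} and the path-lifting argument). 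Without an actual argument in the foliation case your proof does not get off the ground.

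The two remaining steps also have genuine gaps. For injectivity, $F(z)=F(w)$ gives $\widetilde\rho_\varphi(z)=\widetilde\rho_\varphi(w)$ for all $\varphi\in\mathcal F$, i.e.\ $w$ lies on the Lempert fibre of $\rho_\varphi$ through $z$ when $z\in\varphi(\D)$, and symmetrically; but two points each lying on a Lempert fibre through the other is not a contradiction, and in the foliation case there is only one geodesic through each point, so the ``chaining'' has nothing to chain. (The global fibres of $\widetilde\rho_\varphi$ are analytic discs whose pairwise intersections away from the base point you have no control over.) For surjectivity, you invoke ``an argument in the spirit of Theorem~\ref{main-intro} applied to $F(D)\hookrightarrow G$'', but Theorem~\ref{main-intro} requires all geodesics of the family to pass through a common point of $\overline{D}$ --- precisely the hypothesis Theorem~\ref{main2-intro} drops --- and $F(D)$ is merely an open subset of $G$ with no convexity or boundary regularity, so neither the statement nor its proof applies. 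The paper's route is different: it first shows $F$ has no singular values near $\partial G$ (again by scaling to the ball and using the $\B^2$ case), then lifts paths in $G\setminus K_n$ to build a local inverse there, and extends it to all of $G$ by Hartogs. I would encourage you to try to push the left-inverse identity further --- it is a genuinely nice reformulation --- but as it stands the proposal contains the easy reductions and none of the substance.
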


The smoothness assumption of the boundaries in Theorem~\ref{main-intro} and Theorem~\ref{main2-intro} can be drop up to $C^3$-smoothness, using the boundary representation of  Huang and  Wang  \cite{HW} instead of the one by Chang, Hu and Lee \cite{Cha-Hu-Lee 1988}. We leave this technical point to the interested reader. 

For the proof of Theorem~\ref{main2-intro} we  first prove the corresponding result for $D=G=\B^2$. Then, using a double scaling via squeezing functions and using the  slice rigidity property in $\B^2$, we prove that any holomorphic map $F$ which acts as an isometry on $\mathcal F$ has no singular values close to the boundary of $G$ and that lifts paths, forcing $F$ to be invertible. 

In fact, the previous strategy holds in any dimension and,  if one could prove that the slice rigidity property (for complete collection of complex geodesics, without assumption on the existence of a common fixed point) holds for $\B^n$, $n\geq 3$, then Theorem~\ref{main2-intro} would hold in any dimension. Unfortunately, our proof of the result in $\B^2$ does not seem to generalize to $n\geq 3$. Therefore, the main open question is: does Theorem~\ref{main2-intro} hold for $D=G=\B^n$, $n\geq 3$?

The proof of Theorem~\ref{main-intro} is more complicated than that of Theorem~\ref{main2-intro}. We use a different strategy, and prove that any holomorphic map $F$ which acts as an isometry on $\mathcal F$ is proper. We briefly sketch the idea in case $p\in\partial D$.  First of all, we show that the image of every element in $\mathcal F$ via $F$ contains a point $q\in \partial G$. Then, using stability of complex geodesics, we prove that if $F$ is not proper,  there exists a sequence $\{z_k\}\subset D$ converging complex tangentially to $p$ such that $\{F(z_k)\}$ is relatively compact in $G$. Next, we use  a suitable local scaling in $D$, which fixes $p$,  and allows to   create an open bunch of complex geodesics in $\B^n$ passing through a given point  $e_1\in \partial \B^n$. The same scaling keeps the complex geodesics through $z_k$ and $p$ almost complex tangential in $\B^n$. Then we scale $G$ using squeezing functions along a suitable non-tangential sequence converging to $q$. The maps which result from the composition of these two scalings with $F$ fix the origin and  converge to a holomorphic self-map of the unit ball which acts as an isometry on a collection of complex geodesics satisfying the hypothesis of Theorem~\ref{main-ball}, so an  unitary transformation. But, by construction, this transformation sends an almost complex tangential geodesic into a one very close to the origin, getting a contradiction.
\smallskip

These type of rigidity results are faced naturally when dealing with invariant objects in strongly convex domains, or,  more general, in those domains where every two points can be joined by a complex geodesics. For instance, let $D\subset \C^n$ be a smooth ($C^3$ is in fact enough) bounded strongly pseudoconvex domain. In \cite{Bra-Pat, BPT, BST, HW}, for every $p\in \partial D$ it was defined a pluricomplex Poisson kernel $\Omega_{D,p}$ which shares many properties with the Poisson kernel in $\D$. In  particular (see \cite[Theorem 7.3]{Bra-Pat})  it was proved that if $D, G\subset \C^n$ are smooth bounded strongly convex domains and $F:D\to G$ is a holomorphic map, {\sl which extends continuously at $p\in \partial D$ and $F(p)=q\in \partial G$}, then $F$ is a biholomorphism if and only if there exists $\lambda>0$ such that $F^\ast \Omega_{G,q}=\lambda \Omega_{D,p}$. By \cite{HW}, one can assume $D, G$ to be $C^3$-smooth bounded strongly (linearly) convex domains. The proof of such a result shows that if there exist $p\in \partial D$ and $q\in \partial G$ such that $F^\ast \Omega_{G,q}=\lambda \Omega_{D,p}$ (no continuity extension of $F$ required at $p$) then $F$ maps the complex geodesics of $D$ whose closure contains $p$ isometrically onto complex geodesics of $G$. 
 Thus, applying Theorem\ref{main-intro} we can drop the continuity assumption of $F$ at $p$, and we have:

\begin{corollary}
Let $D, G\subset \C^n$ be $C^3$-smooth bounded strongly (linearly) convex domains. Let $F:D\to G$ be holomorphic. Then $F$ is a biholomorphism if and only if there exist $p\in \partial D$, $q\in \partial G$  and $\lambda>0$ such that $F^\ast \Omega_{G,q}=\lambda\Omega_{D,p}$.
\end{corollary}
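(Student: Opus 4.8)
The plan is to read the corollary off from Theorem~\ref{main-intro} together with \cite[Theorem~7.3]{Bra-Pat}, so that essentially no new computation is needed; recall that by \cite{HW} (see the discussion following Theorem~\ref{main2-intro}) both of these results hold in the class of $C^3$-smooth bounded strongly (linearly) convex domains. For the necessity, assume $F$ is a biholomorphism. Since $D$ and $G$ are bounded strongly pseudoconvex with $C^3$ (hence $C^2$) boundary, the boundary regularity theory for proper holomorphic maps between such domains (Henkin, Pinchuk) gives that $F$ extends to a homeomorphism of $\overline D$ onto $\overline G$, so in particular $F(\partial D)=\partial G$. Fix any $p\in\partial D$ and put $q:=F(p)\in\partial G$; then $F$ extends continuously at $p$ with $F(p)=q$, and the ``only if'' part of \cite[Theorem~7.3]{Bra-Pat} supplies $\lambda>0$ with $F^\ast\Omega_{G,q}=\lambda\,\Omega_{D,p}$.

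For the sufficiency, suppose there are $p\in\partial D$, $q\in\partial G$ and $\lambda>0$ with $F^\ast\Omega_{G,q}=\lambda\,\Omega_{D,p}$. As recalled just before the statement, the proof of \cite[Theorem~7.3]{Bra-Pat} establishes --- with no continuity assumption on $F$ at $p$ --- that $F$ maps every complex geodesic of $D$ whose closure contains $p$ isometrically onto a complex geodesic of $G$. Let $\mathcal F$ be the collection of all such complex geodesics. By Lempert's theorem \cite{Lem 1981, Lem 1982, Lem 1984}, for every $z\in D$ there is a complex geodesic $\varphi\colon\D\to D$, smooth up to $\overline{\D}$, with $p,z\in\overline{\varphi(\D)}$; since complex geodesics of a strongly convex domain send $\partial\D$ into $\partial D$ while $z\in D$, in fact $z\in\varphi(\D)$, whence $\varphi\in\mathcal F$. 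Thus $\mathcal F$ is complete, and $p\in\overline{\varphi(\D)}$ for every $\varphi\in\mathcal F$ by construction, so Theorem~\ref{main-intro} applies to the triple $(D,G,\mathcal F)$. Since $F\circ\varphi$ is a complex geodesic of $G$ for every $\varphi\in\mathcal F$, the slice rigidity property forces $F$ to be a biholomorphism.

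There is no genuine obstacle here: the whole content is carried by Theorem~\ref{main-intro} and by \cite{Bra-Pat}. The only points deserving a moment's care are (i) the continuous extension of a biholomorphism to the closed domains, which is needed to quote the ``only if'' direction of \cite[Theorem~7.3]{Bra-Pat}, and (ii) the completeness of the family of complex geodesics of $D$ through the fixed boundary point $p$, which is precisely Lempert's existence statement.
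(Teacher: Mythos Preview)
Your proposal is correct and follows essentially the same approach as the paper: the corollary is stated without a separate proof, and the paragraph preceding it explains exactly the argument you give---namely, that the proof of \cite[Theorem~7.3]{Bra-Pat} already shows (without any continuity assumption at $p$) that $F^\ast\Omega_{G,q}=\lambda\Omega_{D,p}$ forces $F$ to map complex geodesics through $p$ isometrically onto complex geodesics of $G$, after which Theorem~\ref{main-intro} applies. Your added details (boundary extension of biholomorphisms for the ``only if'' direction, and the verification via Lempert that the family $\mathcal F$ is complete) are appropriate elaborations of what the paper leaves implicit.
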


Another instance of appearance of the slice rigidity property is in the proof of the strong form of the Schwarz-Pick lemma at the boundary for strongly convex domains \cite[Theorem 2.3, Theorem 7.5]{Bra-Kra-Rot 2020}.  Roughly speaking, such a result says that the only holomorphic maps, between two smooth bounded   strongly convex domains, which preserve the infinitesimal Kobayashi metric up to order one with respect to the Euclidean distance from a given boundary point are biholomorphisms. However, two extra technical conditions  are needed in the proof of such a result in dimension greater than $1$. The proof of \cite[Theorem 7.5]{Bra-Kra-Rot 2020} shows that assuming only hypothesis (2)  in that theorem, one comes up with a map which is an isometry along complex geodesics. The technical hypothesis (1) in \cite[Theorem 7.5]{Bra-Kra-Rot 2020} was needed to prove that the map is then proper, hence a biholomorphism. Using Theorem~\ref{main-intro} we can thus remove such a condition. Hence, one can prove the following result, which improves \cite[Theorem 2.3, Theorem 7.5]{Bra-Kra-Rot 2020}.

Let $D\subset \C^n$ be a bounded domain with smooth boundary. As customary, $\delta_D(z)$ denotes the Euclidean distance of a point $z\in D$ from  $\partial D$. Also, if $z\in D$, we  denote by $\pi(z)\in \partial D$ a point  such that $\delta_D(z)=|z-\pi(z)|$ (such a point is unique if  $z$ is sufficiently close to the boundary). Finally, for $q\in \partial D$ and $v\in \C^n$ we denote by $\Pi_q(v)$ the orthogonal projection (with respect to the standard Hermitian product of $\C^n$) of $v$ onto $T^\C_q\partial D$. With this notation at hand, we have:

\begin{corollary}
Let $D, D'\subset \C^N$ be two bounded strongly convex domains with smooth boundary. Let $F:D\to D'$ be holomorphic.  Let $p\in \partial D$. Then $F$ is a biholomorphism if and only if 
\begin{itemize}
\item[a)]  $|\Pi_{\pi(F(z))}(dF_{z}(w))|$ is uniformly bounded for any $w\in \C^n\setminus T_p\partial D$, $|w|=1$ when $z\to p$ non-tangentially, 
\item[b)] and 
\begin{equation*}
k_{D'}(F(z); dF_{z}(w))=k_D(z;w)+o\left(\delta_D(z)\right), 
\end{equation*}
when $ z\to p$ non-tangentially and uniformly in $w$. 
\end{itemize}
  \end{corollary}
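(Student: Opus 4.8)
\section*{Proof proposal for the Corollary}

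The plan is to split the equivalence into its two implications: the direct one is essentially formal, while the converse reduces, via the proof of \cite[Theorem 7.5]{Bra-Kra-Rot 2020}, to a direct application of Theorem~\ref{main-intro}.

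For the ``only if'' direction, assume $F$ is a biholomorphism. Since $D$ and $D'$ are smoothly bounded and strongly convex, hence strongly pseudoconvex, Fefferman's theorem gives that $F$ extends to a $C^\infty$ diffeomorphism $\overline D\to\overline{D'}$; in particular $\sup_{z\in\overline D}\|dF_z\|<\infty$, and because $\Pi_q$ is an orthogonal projection one has $|\Pi_{\pi(F(z))}(dF_z(w))|\le\|dF_z\|$, which yields a). Condition b) holds with identically vanishing error term, since a biholomorphism is an exact isometry for the infinitesimal Kobayashi metric, $k_{D'}(F(z);dF_z(w))=k_D(z;w)$.

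For the ``if'' direction, suppose a) and b) hold. These are precisely the hypotheses of \cite[Theorem 7.5]{Bra-Kra-Rot 2020} apart from the properness-type condition (1), and the first part of the proof of that theorem --- the part that does not invoke (1) --- shows that $F$ maps every complex geodesic $\varphi:\D\to D$ with $p\in\overline{\varphi(\D)}$ isometrically onto a complex geodesic of $D'$. Let $\mathcal F$ be the collection of all such complex geodesics. By Lempert's theorem \cite{Lem 1981, Lem 1982, Lem 1984}, for every $z\in D$ there is a complex geodesic $\varphi$ with $z\in\varphi(\D)$ and $p\in\overline{\varphi(\D)}$, so $\mathcal F$ is complete; and $p\in\overline{\varphi(\D)}$ for every $\varphi\in\mathcal F$ by construction. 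As a smoothly bounded strongly convex domain is in particular strongly linearly convex, Theorem~\ref{main-intro} applies to the triple $(D,D',\mathcal F)$ and forces $F$ to be a biholomorphism. Note that no continuity assumption on $F$ at $p$ is used, exactly as in the analogous removal of the continuity hypothesis for the pluricomplex Poisson kernel discussed above.

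The only point that genuinely requires care --- everything else being bookkeeping --- is to verify that the argument extracted from the proof of \cite[Theorem 7.5]{Bra-Kra-Rot 2020} really produces the geodesic-isometry conclusion from a) and b) alone, i.e.\ that the properness hypothesis (1) there is used solely in the final step (the one we are now replacing by Theorem~\ref{main-intro}) and nowhere in the construction of the isometry along complex geodesics through $p$. Once this is checked against \cite{Bra-Kra-Rot 2020}, the corollary follows.
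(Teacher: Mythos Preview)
Your proposal is correct and is precisely the argument the paper sketches in the paragraph preceding the corollary: the paper asserts that the proof of \cite[Theorem~7.5]{Bra-Kra-Rot 2020} under hypothesis~(2) alone yields the isometry along complex geodesics through~$p$, and that hypothesis~(1) was used only for the properness step now replaced by Theorem~\ref{main-intro}. Your added ``only if'' direction via Fefferman's theorem and exact Kobayashi isometry is the natural (and implicit) completion, and your closing caveat about verifying the role of hypothesis~(1) in \cite{Bra-Kra-Rot 2020} is exactly the point the paper takes as given.
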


We do not know if condition (a) in the previous corollary is necessary.

\medskip

Part of this work has been done while the first named author was visiting the host institution of the other two authors in Krak\'ow. He wishes to express his thanks for the wonderful atmosphere he experienced there.

\section{The proof of Theorem~\ref{main-ball}}

We start with the following general proposition:

\begin{proposition}\label{prop-analytic}
Let $G\subset \C^n$ be  bounded strongly (linearly) convex domain with real analytic boundary.  Let $\mathcal F$ be a  collection of complex geodesics of $\B^n$ such that there exist a compact set $K\subset \B^n$ and a set $U$ so that $U$ is a neighborhood of some point of $\partial \B^n$  and for every $z\in U\cap \B^n$ there exists $\varphi\in\mathcal F$ such that $\varphi(\D)\cap K\neq\emptyset$ and $z\in \varphi(\D)$. Then $(\B^n,G,\mathcal F)$ satisfies the slice rigidity property.
\end{proposition}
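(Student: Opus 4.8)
The plan is to prove that any holomorphic $F\colon\B^n\to G$ which sends every $\varphi\in\mathcal F$ to a complex geodesic of $G$ extends holomorphically across a neighbourhood of the distinguished boundary point of $\B^n$ as a local biholomorphism carrying the corresponding piece of $\partial\B^n$ into $\partial G$; a standard analytic continuation argument for strongly pseudoconvex domains with real analytic boundary then globalizes this and forces $F$ to be a biholomorphism of $\B^n$ onto $G$.

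The argument hinges on one algebraic identity. For a bounded strongly convex domain $D$ and a complex geodesic $\psi\colon\D\to D$, Lempert's theory provides a holomorphic retraction $\rho_\psi\colon D\to\psi(\D)$ and the left inverse $\tilde p_\psi:=\psi^{-1}\circ\rho_\psi\colon D\to\D$, which is the \emph{unique} holomorphic map $D\to\D$ with $\tilde p_\psi\circ\psi=\mathrm{id}_\D$ (if $p$ is another one, then $\psi\circ p$ is a holomorphic retraction onto $\psi(\D)$, hence equals $\rho_\psi$ by uniqueness of the retraction, whence $p=\tilde p_\psi$). When $\partial D$ is real analytic, $\psi$, $\rho_\psi$ and $\tilde p_\psi$ are real analytic up to the boundary, hence — by the identity principle for $\bar\partial$ — extend holomorphically to a neighbourhood of $\overline D$; this is the sole place where real analyticity of $\partial G$ enters. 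For $D=\B^n$ the geodesics are affine slices and the $\tilde p_\varphi$ are explicit linear fractional maps, holomorphic on $\C^n$ off one hyperplane. Now fix $\varphi\in\mathcal F$ and put $\psi:=F\circ\varphi$. Then $\tilde p_\psi\circ F\colon\B^n\to\D$ is holomorphic with $(\tilde p_\psi\circ F)\circ\varphi=\mathrm{id}_\D$, so by uniqueness
\[
\tilde p_{F\circ\varphi}\circ F=\tilde p_\varphi\quad\text{on }\B^n,\qquad\varphi\in\mathcal F,
\]
and consequently $F\circ\rho_\varphi=\rho_{F\circ\varphi}\circ F$.

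Next I would use $K$ and $U$, where $U$ is a neighbourhood of some $\xi\in\partial\B^n$. For $z\in U\cap\B^n$ pick $\varphi_z\in\mathcal F$ with $z\in\varphi_z(\D)$ and $\varphi_z(\D)\cap K\neq\emptyset$, normalized by $\varphi_z(0)\in K$, and write $z=\varphi_z(t_z)$. Since the Kobayashi metric of $\B^n$ is bounded above and below on $K$, $\{\varphi_z\}$ is a normal family whose limit points are complex geodesics meeting $K$; likewise $\{F\circ\varphi_z\}$ is a normal family of complex geodesics of $G$ through the compact set $F(K)$, and by real analyticity of $\partial G$ and stability of complex geodesics these geodesics and their holomorphic extensions past $\partial\D$ converge uniformly on $\overline\D$ along such families. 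As $z\to\xi$ one has $t_z\to\partial\D$, and geodesics of $G$ map $\partial\D$ into $\partial G$; hence $\operatorname{dist}(F(z),\partial G)\to0$ as $z\to\xi$ within $U\cap\B^n$, and any subsequential limit of $F(z)$ lies on $\partial G$, say $q_*$. To convert this into a genuine extension of $F$ I would select finitely many geodesics $\varphi_{z_1},\dots,\varphi_{z_n}$ from the open bunch covering $U\cap\B^n$ for which $\Psi:=(\tilde p_{F\circ\varphi_{z_1}},\dots,\tilde p_{F\circ\varphi_{z_n}})$ is a biholomorphism of a neighbourhood of $q_*$ onto an open set; since $\Psi\circ F=(\tilde p_{\varphi_{z_1}},\dots,\tilde p_{\varphi_{z_n}})$ near $\xi$ by the displayed identity, and the right-hand side is holomorphic on a full $\C^n$-neighbourhood of $\xi$ while $\Psi^{-1}$ is holomorphic near $q_*$, the formula $F=\Psi^{-1}\circ(\tilde p_{\varphi_{z_i}})_i$ extends $F$ holomorphically across a piece of $\partial\B^n$ containing $\xi$; by the boundary estimate this extension maps that piece into $\partial G$, and it is a local biholomorphism there.

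It then remains to globalize: a germ of biholomorphism taking a piece of $\partial\B^n$ onto a piece of the strongly pseudoconvex real analytic hypersurface $\partial G$ continues analytically as a CR map along all paths in $\partial\B^n$, and — $\partial\B^n$ being connected and simply connected — with no monodromy, producing a CR diffeomorphism $\partial\B^n\to\partial G$ induced by a biholomorphism $\B^n\to G$; since this biholomorphism agrees with $F$ on an open set, $F$ itself is a biholomorphism. The step I expect to be the main obstacle is the selection of $\varphi_{z_1},\dots,\varphi_{z_n}$ with $\Psi$ of rank $n$, equivalently the fact that $dF$ attains rank $n$ somewhere: this ought to follow because the geodesics $\varphi_z(\D)$ cover the open set $U\cap\B^n$, whereas no single holomorphic map $\B^n\to\D$ can be a Kobayashi isometry along a two (real) parameter family of distinct complex geodesics of $\B^n$ — but turning this heuristic into a clean proof, while simultaneously controlling the uniform up-to-the-boundary convergence of the extended geodesics of $G$, is the delicate point.
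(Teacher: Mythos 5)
Your algebraic identity $\tilde p_{F\circ\varphi}\circ F=\tilde p_\varphi$ is correct (uniqueness of left inverses of complex geodesics holds in $\B^n$ and in $G$), and your observation that cluster values of $F(z)$ as $z\to\xi$ lie on $\partial G$ is, in substance, the continuity argument the paper also runs. But the step you yourself flag as ``the delicate point'' is a genuine gap, not a technicality. To define the extension by $F=\Psi^{-1}\circ(\tilde p_{\varphi_{z_1}},\dots,\tilde p_{\varphi_{z_n}})$ on a full $\C^n$-neighborhood of $\xi$ you need $\Psi=(\tilde p_{F\circ\varphi_{z_1}},\dots,\tilde p_{F\circ\varphi_{z_n}})$ to be invertible on a neighborhood of the \emph{boundary} point $q_*$. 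The covering-by-geodesics heuristic at best yields that $(\tilde p_{\varphi_{z_1}},\dots,\tilde p_{\varphi_{z_n}})$ has rank $n$ at some interior point $z_0$, whence by the chain rule $d\Psi$ has rank $n$ at the interior point $F(z_0)$; but $\det d\Psi$, even granting that it extends holomorphically past $q_*$, may vanish at $q_*$, and nonvanishing at interior points accumulating at $q_*$ gives no fixed neighborhood of $q_*$ on which $\Psi$ is injective. Since $q_*$ is only a subsequential limit of $F(z)$ and you have no regularity for the boundary correspondence $\xi\mapsto q_*$, you also cannot perturb $\xi$ to avoid the zero set of $\det d\Psi$. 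A secondary unproved assertion is that the left inverse $\tilde p_\psi$ of a geodesic of $G$ is real analytic up to $\overline G$ (so that it continues holomorphically past $\partial G$): this needs nondegeneracy of Lempert's dual map at the boundary and does not follow merely from real analyticity of $\partial G$ plus the identity principle.

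For comparison, the paper never inverts anything at the boundary. Using stability of complex geodesics meeting the compact set $K$ (respectively $F(K)$), it shows that $F$ extends \emph{continuously} to an open piece $W$ of $\partial\B^n$ with $F(W)\subset\partial G$; thus $F|_W$ is a continuous CR map between strictly pseudoconvex real analytic hypersurfaces, and the theorems of Pinchuk--Tsyganov (regularity of continuous CR maps) and Pinchuk (analytic continuation) then deliver, in one stroke, exactly the local invertibility and global biholomorphy your construction is missing. If you insist on the left-inverse scheme, you would still have to establish continuity up to the boundary first, at which point the CR route is both shorter and already complete.
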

\begin{proof}
 Let $F:\B^n\to G$ be holomorphic with the property that $F\circ \varphi$ is a complex geodesic in $G$ for every $\varphi\in \mathcal F$.

Since  by hypothesis $U\cap \partial \B^n$ is a neighborhood of some point of $\partial \B^n$, there exists a non-empty connected open set $W\subset U\cap \partial \B^n$. 

We claim that, for every $p\in W$, there exists $\varphi_p\in \mathcal F$ such that $\varphi_p(\D)\cap K\neq \emptyset$ and $p\in \varphi_p(\oD)$.
Indeed, let $\{z_k\}\subset \B^n$ be a sequence converging to $p$. By construction, $z_k$ is eventually contained in $U$---and, with no loss of generality, we assume $z_k\in U$ for all $k$ --- hence, there exist $\varphi_k\in\mathcal F$ such that $z_k\in \varphi_k(\D)$ and $\varphi_k(\D)\cap K\neq\emptyset$ for all $k$. The last condition implies that, up to re-parametrization and extracting subsequences, $\varphi_k$ converges uniformly on $\overline{\D}$ to a complex geodesic $\varphi_p$ such that $\varphi_p(\D)\cap K\neq \emptyset$ (this is easy to see for images of complex geodesics of $\B^n$ because they are intersections of complex affine lines with $\B^n$, or, see, {\sl e.g.}, \cite[Corollary 2.3]{BPT}). Since $z_k\in \varphi_k(\D)$, the uniform convergence on $\oD$ implies that $p\in \varphi_p(\oD)$.

Let $p\in W$ and let $\varphi_p$ be as above. Up to re-parameterization, we can assume $\varphi_p(1)=p$. We define
\[
F(p):=\lim_{r\to 1}F(\varphi_p(r))\in\partial G.
\]
In fact, such a limit exists because by hypothesis $F\circ \varphi_p$ is a complex geodesic in $G$, which extends hence real analytically through $\partial \D$. 

Now we show that $F$ is continuous at $p$, for every $p\in W$. To this aim, let $\{z_k\}\subset \overline{\B^n}$ be a sequence converging to $p$. By construction, $z_k$ is eventually contained in $U$---and, with no loss of generality, we assume $z_k\in U$ for all $k$---hence, there exist $\varphi_k\in\mathcal F$ such that $z_k\in \varphi_k(\oD)$ and $\varphi_k(\D)\cap K\neq\emptyset$ for all $k$. Since $\{F\circ \varphi_k\}$ are complex geodesics in $G$, whose images intersect the relatively compact set $F(K)$, up to re-parametrization and extracting subsequences, we can assume that $\{\varphi_k\}$ converges uniformly on $\oD$ to a complex geodesic $\varphi_p$ of $\B^n$ such that $p\in \varphi_p(\oD)$---and we can assume $\varphi_p(1)=p$---and $\{F\circ \varphi_k\}$ converges uniformly on $\oD$  to a complex geodesic $\eta$ of $G$ (see, {\sl e.g.}, \cite[Corollary 2.3]{BPT}). Clearly, $\eta=F\circ \varphi_p$, since $\{F\circ \varphi_k\}$ converges uniformly on compacta to $F\circ \varphi_p$.   Moreover, due to the uniform convergence on $\oD$, if $z_k=\varphi_k(\zeta_k)$ for some $\zeta_k\in \D$, we have
\[
\lim_{k\to \infty}F(z_k)=\lim_{k\to \infty}F(\varphi_k(\zeta_k))=F(\varphi_p(1))=F(p).
\]
Hence, $F$ extends continuously on $W$ and $F(W)\subset\partial G$. In particular, $F$ is (in the distributional sense) a continuous CR-mapping between $W$ and $F(W)\subset \partial G$. 

Therefore, by \cite{PT}, for every $p\in W$ there exists a neighborhood $W_p\subset W$ such that $F:\B^n\cup W_p\to G\cup F(W_p)$ is real analytic. Thus, by \cite[Theorem 2]{Pin}, $F:\B^n \to G$ is a biholomorphism. 
\end{proof}

Now we can prove Theorem~\ref{main-ball}.

\begin{proof}[Proof of Theorem~\ref{main-ball}]
According to Proposition~\ref{prop-analytic}, it is enough to show that there exist a compact set $K\subset \B^n$ and a set $U$ so that $U$ is a  neighborhood of some point of $\partial \B^n$  and for every $z\in U\cap \B^n$ there exists $\varphi\in\mathcal F$ such that $\varphi(\D)\cap K\neq\emptyset$ and $z\in \varphi(\D)$.

Let $V\subset \B^n$ be the open set given by the hypothesis of Theorem~\ref{main-ball}. Let $B$ be a small Euclidean ball such that $K:=\overline{B}\subset V\setminus\{q\}$. 

For each $z\in K$, let $v_z:=z-q$. Hence, $(q+\C v_z)\cap \B^n$ is (the image of) a complex geodesic in $\B^n$ whose closure contains $q$ and $z$. By hypothesis, and since complex geodesics in $\B^n$ are unique up to parametrization, it follows that for every $z\in K$ there exists $\varphi\in \mathcal F$ such that $\varphi(\D)=(q+\C v_z)\cap \B^n$. 

Hence, if we let $U:=\bigcup_{z\in K}(z+\C v_z)$, in order to conclude the proof, we need to show that there exists $p\in \partial \B^n$ such that $U\cap \partial \B^n$ is a neighborhood of $p$. 

To see this, up to composing with automorphisms of $\B^n$ we can assume that $K=\{z\in\B^n: |z|\leq \rho\}$ for some $\rho>0$  and, up to composing further with a unitary transformation, we can assume that $q=-re_1$ for $r\in (\rho, +\infty)$, where  $e_1:=(1,0,\ldots, 0)$. Let $Y:=\{w\in \C^{n-1}: |w|<\rho\}$ and let $\Psi:\C\times Y\to \C^n$ be the holomorphic map given by $\Psi(\zeta,w)=-re_1+\zeta (r,w)$, $\zeta\in \C, w\in Y$. Clearly, $\Psi(\C\times Y)\subset U$. Moreover,  a direct computation shows that $\det(d \Psi_{(\zeta, w)})=r\zeta^{n-1}$. In particular, $\Psi$ is a local biholomorphism at $(1+\frac{1}{r},0)$. Since $\Psi(1+\frac{1}{r},0)=e_1$, it follows that $\Psi(\C\times Y)$---and hence $U$---contains an open set $W$ such that $p\in W\cap \partial \B^n$, and we are done.
\end{proof}

\section{The proof of Theorem~\ref{main2-intro}}

The aim of this section is to prove Theorem~\ref{main2-intro}. We first prove the result in case $D=G=\B^2$. We start with the following lemma:

\begin{lemma}\label{Cart-ball}
Let $F:\B^2\to \B^2$ be holomorphic. Suppose there exist two complex geodesics $\varphi, \eta:\D \to \B^2$ such that $\varphi(\D)\cap \eta(\D)=\{z\}$ for some $z\in \B^2$ and $F$ is an isometry for the Kobayashi metric on $\varphi$ and $\eta$. Then $F$ is an automorphism of $\B^2$.
\end{lemma}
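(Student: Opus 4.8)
The plan is to use that complex geodesics of $\B^2$ are affine discs, normalize by composing with automorphisms of $\B^2$, and then show that the differential of $F$ at the common point is unitary, so that Cartan's theorem forces $F$ to equal that unitary. Since $\mathrm{Aut}(\B^2)$ acts transitively on $\B^2$ and its isotropy group at $0$ is the unitary group, after replacing $F$ by $\Psi\circ F\circ\Phi^{-1}$ for suitable $\Phi,\Psi\in\mathrm{Aut}(\B^2)$ and reparametrizing the two geodesics, I may assume $z=0$, $F(0)=0$, $\varphi(\zeta)=\zeta e_1$ and $\eta(\zeta)=\zeta w$ for some unit vector $w\in\C^2$ (all of this preserves the hypothesis that $F\circ\varphi$ and $F\circ\eta$ are complex geodesics of $\B^2$). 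Since $\varphi(\D)$ and $\eta(\D)$ are then distinct complex lines through $0$, they meet only at $0$ --- which is exactly the hypothesis $\varphi(\D)\cap\eta(\D)=\{z\}$ --- and in particular $w\notin\C e_1$, so $\{e_1,w\}$ is a $\C$-basis of $\C^2$.

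First I would record what the isometry hypothesis gives at the level of first-order jets. Both $F\circ\varphi$ and $F\circ\eta$ are complex geodesics of $\B^2$ vanishing at $\zeta=0$, and every complex geodesic of $\B^n$ through the origin is, up to a rotation of $\D$, of the form $\zeta\mapsto\zeta\cdot(\text{unit vector})$; hence $F(\zeta e_1)=\zeta\hat v$ and $F(\zeta w)=\zeta\hat w$ on $\D$ for suitable unit vectors $\hat v,\hat w\in\C^2$. Differentiating at $\zeta=0$ yields $dF_0(e_1)=\hat v$ and $dF_0(w)=\hat w$, both unit vectors.

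The core of the proof is then the claim that $dF_0$ is unitary. Put $A:=dF_0$. Since $F:\B^2\to\B^2$ is holomorphic with $F(0)=0$, the Schwarz lemma --- using $k_{\B^2}(0;u)=|u|$ and the distance-decreasing property of $F$ --- gives $\|A\|\le1$, i.e.\ $I-A^\ast A\ge0$. Consequently $\{u\in\C^2:\,|Au|=|u|\}=\ker(I-A^\ast A)$ is a $\C$-linear subspace of $\C^2$; it contains the linearly independent vectors $e_1$ and $w$, so it equals $\C^2$, whence $A^\ast A=I$. Thus $A=dF_0$ is unitary, and $(dF_0)^{-1}\circ F:\B^2\to\B^2$ is holomorphic, fixes $0$, and has identity differential at $0$; by Cartan's uniqueness theorem it is the identity, so $F=dF_0$ is a unitary transformation. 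Unwinding the normalizations, the original $F$ is a composition of automorphisms of $\B^2$ and hence an automorphism.

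I do not expect a real obstacle here. The one point requiring a little care is the normalization: one has to use that two distinct complex geodesics of $\B^2$ sharing a point meet only there, and that each has affine image, in order to bring $\varphi,\eta$ into the stated form; the rest rests on the elementary fact that the set on which a linear contraction is isometric is a linear subspace, together with Cartan's theorem. Note that the whole argument uses only the two given geodesics and their common point, with no completeness assumption on a larger family.
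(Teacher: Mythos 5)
Your proof is correct and follows essentially the same route as the paper: normalize by automorphisms so the geodesics pass through $0=F(0)$, use the Schwarz lemma to see that $dF_0$ is a contraction that preserves the norm of two linearly independent vectors, conclude $dF_0$ is unitary, and finish with Cartan's theorem. Your observation that $\{u:|dF_0 u|=|u|\}=\ker(I-dF_0^{\ast}dF_0)$ is a linear subspace is a slightly cleaner (and dimension-free) way to get unitarity than the paper's explicit $2\times 2$ coordinate computation, but the argument is the same in substance.
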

\begin{proof}
Composing with automorphisms, we can assume that $z=0$, $\varphi(\zeta)=(\zeta, 0)$, $F(\varphi(\zeta))=(\zeta,0)$ and $\eta(\zeta)=\zeta \underline{v}$ for some $\underline{v}=(v_1,v_2)\in \C^2$ such that $|\underline{v}|=1$ and $v_2\neq 0$. Hence, 
\[
dF_0=\left(\begin{matrix}
1& a\\ 0& b
\end{matrix}\right),
\]
for some $a, b\in \C$. By Schwarz' Lemma, $|dF_0((\alpha, \beta))|\leq 1$ for all $\alpha,\beta\in \C$ such that $|\alpha|^2+|\beta|^2=1$. The previous inequality implies $|\alpha+a\beta|^2+|b\beta|^2\leq 1$, or, equivalently, 
\[
2\Re (\overline{\alpha}a\beta)\leq (1-|a|^2-|b|^2)|\beta|^2.
\]
The only possibility is thus $a=0$, and $|b|\leq 1$. However, if $|b|<1$, then $|dF_0(\underline{v})|<1$, against the hypothesis that 
\[
1=k_{\B^2}(0; \underline{v})=k_{\B^2}(0; dF_0(\underline{v}))=|dF_0(\underline{v})|.
\]
Therefore, $|b|=1$ and by Cartan's Theorem, $F$ is an automorphism of $\B^2$.
\end{proof}

\begin{proposition}\label{ball}
Theorem~\ref{main2-intro} holds if $D=G=\B^2$.
\end{proposition}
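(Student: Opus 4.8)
Let $F\colon\B^2\to\B^2$ be holomorphic with $F\circ\varphi$ a complex geodesic of $\B^2$ for every $\varphi\in\mathcal F$. Since the image of a complex geodesic of $\B^2$ is the intersection of $\B^2$ with a complex affine line, write $\varphi(\D)=\ell_\varphi\cap\B^2$ with $\ell_\varphi\subset\C^2$ a line. The plan is to dichotomize according to whether two geodesics of $\mathcal F$ have ``crossing'' images.

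First suppose there are $\varphi,\eta\in\mathcal F$ with $\ell_\varphi\neq\ell_\eta$ and $\varphi(\D)\cap\eta(\D)\neq\emptyset$. Two distinct affine lines of $\C^2$ meet in at most one point, so $\varphi(\D)\cap\eta(\D)$ is a single point of $\B^2$; since $F$ is an isometry along both $\varphi$ and $\eta$, Lemma~\ref{Cart-ball} immediately yields that $F$ is an automorphism of $\B^2$. So from now on assume that any two geodesics of $\mathcal F$ with distinct images have disjoint images. As $\mathcal F$ is complete, through every $z\in\B^2$ passes exactly one ``leaf'' $\ell(z)\cap\B^2$, and I would first check that $z\mapsto\ell(z)$ is a holomorphic foliation of $\B^2$ by complex lines: if $z_k\to z_0$ and $\ell(z_k)\to L\neq\ell(z_0)$, then $L$ and $\ell(z_0)$ are transverse at $z_0$, so for large $k$ the line $\ell(z_k)$ (close to $L$) would meet $\ell(z_0)$ at a point of $\B^2$ near $z_0$ — forcing $\ell(z_k)=\ell(z_0)$, a contradiction; hence $z\mapsto\ell(z)$ is continuous, and, being constant along the complex leaves, holomorphic.

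The key step — and the one I expect to be the genuine obstacle — is to show that this foliation is a \emph{pencil}: there is a point $p$ of $\Po^2$, necessarily outside $\B^2$ (the leaves being pairwise disjoint there), such that every $\ell_\varphi$ passes through $p$. One way to see it: the holomorphic leaf map carries $\B^2$ onto an open subset of an irreducible curve $C$ in the dual projective plane, and the disjointness of the leaves forbids $\deg C\geq2$, for each leaf is tangent to the envelope of $C$ at a point which must lie off $\B^2$ (otherwise nearby leaves, being nearby tangent lines to that envelope, would cross inside $\B^2$); hence $C$ is a line, i.e.\ a pencil. Equivalently, fixing a leaf $L_0$ and mapping each other leaf $L$ to $L_0\cap L\in L_0\cong\Po^1$ gives a holomorphic map from the simply connected leaf space into the complement of the disc $L_0\cap\B^2$, which one wants to argue is constant. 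This is where $\B^2$ seems essential, and presumably where the argument fails for $\B^n$, $n\geq3$.

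Granting the pencil structure, one finishes as follows. If $p\in\C^2$, then every $\varphi\in\mathcal F$ lies in a complex line through $q:=p$, and since $\mathcal F$ is complete one may take $V:=\B^2$ in Theorem~\ref{main-ball} with $G:=\B^2$ (a bounded strongly (linearly) convex domain with real-analytic boundary); hence $(\B^2,\B^2,\mathcal F)$ has the slice rigidity property and $F$ is an automorphism. If instead $p$ lies on the line at infinity (the ``parallel'' case), I would choose $\Phi\in\Aut(\B^2)$ whose projective extension carries $p$ to a finite point; then $F\circ\Phi^{-1}$ is an isometry along $\Phi(\mathcal F)$, which is a complete pencil through a finite point, so $F\circ\Phi^{-1}$, and therefore $F$, is an automorphism of $\B^2$ by the previous case.
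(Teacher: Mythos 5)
Your first reduction (via Lemma~\ref{Cart-ball}) and your closing step (via Theorem~\ref{main-ball}) match the paper's strategy, but your key step --- that a complete family of pairwise disjoint complex geodesics of $\B^2$ must form a pencil of lines through a common point of $\Po^2$ --- is not just unproven, it is false. Consider $L_c:=\{(z,w)\in\C^2:\ w=\overline{c}\,z+c\}$ for $c\in\C$. Two distinct such lines meet only at a point with $|z|=1$, hence outside $\B^2$; and every $(z,w)\in\B^2$ lies on exactly one $L_c$, since writing $c=a+ib$ the real-linear system $w=a(1+z)+ib(1-z)$ has determinant $1-|z|^2\neq 0$. Thus $\{L_c\cap\B^2\}$ is a complete collection of pairwise disjoint complex geodesics, yet it is not a pencil: $L_0$, $L_1$, $L_i$ pairwise meet at the three distinct points $(-1,0)$, $(1,0)$, $(i,1+i)$. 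Taking $F={\sf id}$ shows the hypotheses of the proposition are fully compatible with such a non-pencil $\mathcal F$. The flaw in your envelope argument is the assertion that the leaf map $z\mapsto\ell(z)$ is holomorphic because it is continuous and constant along the leaves: a first integral of a foliation by complex curves is holomorphic only if the foliation itself is holomorphic, and here it need not be --- in the example the leaf map is $(z,w)\mapsto c$, which is real-analytic but genuinely depends on $\overline{z},\overline{w}$ (the direction $(1,\overline{c})$ of $L_c$ varies anti-holomorphically in $c$).

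Consequently the isometry hypothesis on $F$ must be used in the middle of the argument, not only at the two ends, and this is exactly what the paper does. Writing the leaf through $(0,w)$ with direction $(1,\lambda(w))$, the paper exploits the fact that $\lambda(w)$ is a point of equality in the inequality $u(w,\lambda)\le 0$, where $u$ compares $\bigl(k_{\B^2}((0,w);(1,\lambda))\bigr)^2$ with its image under $F$; differentiating in $\overline{\lambda}$ and invoking Lemma~\ref{Cart-ball} once more yields that $\lambda$ is real-analytic (in general not holomorphic). A Jacobian computation for $\Psi(w,\zeta)=(\zeta,w+\zeta\lambda(w))$ then shows that the union of the leaves through a small disc $\{(0,w):|w|<\rho\}$ covers a neighborhood of a boundary point --- the degenerate alternative forces $\lambda(w)=\overline{\alpha(w)}(1-|w|^2)$ with $\alpha'\colon\D\to\C$ holomorphic and proper, which is impossible --- and Proposition~\ref{prop-analytic} concludes. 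Note that the non-holomorphic form $\lambda(w)=\overline{\alpha(w)}(1-|w|^2)$ arising there is precisely the kind of configuration your pencil claim rules out; you would need to replace that step by an argument of the paper's type.
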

\begin{proof}
By Lemma~\ref{Cart-ball}, we can assume that for every $\varphi, \eta\in \mathcal F$ then either $\varphi(\D)=\eta(\D)$ or $\varphi(\D)\cap\eta(\D)=\emptyset$. 

Up to composing with  automorphisms of $\B^2$, we can assume that $0\in \varphi_0(\D)$, with $\varphi_0(\zeta)=(\zeta,0)$, $\zeta\in \D$ and $F(\zeta,0)=(\zeta,0)$. Arguing as in the proof of Lemma~\ref{Cart-ball}, we see that
\begin{equation}\label{Eq:diff}
dF_0=\left(\begin{matrix}
1& 0\\ 0& b
\end{matrix}\right)
\end{equation}
for some $|b|\leq 1$. Note that by Cartan's Theorem, $|b|=1$ if and only if $F$ is an automorphism, so, arguing by contradiction, we can assume $|b|<1$ (and $F$ is not an automorphism). 

Since $\mathcal F$ is complete, for $z\in \B^2$, there exists $\varphi_z\in \mathcal F$ such that, up to re-parametrization, $\varphi_z(0)=z$. 

Since $\varphi_z(\D)$ cannot intersect $\varphi_0(\D)$, it follows that $\varphi_z'(0)\to (1,0)$ as $z\to 0$. 

Let $z=(0,w)$ and $(a_w,b_w):=\varphi_{(0,w)}'(0)$. Note that $a_w\neq 0$ for all $w\in \D$, for otherwise the complex geodesic through $(0,w)$ would intersect $\varphi_0$ at $(0,0)$.

Let $\lambda(w):=\frac{b_w}{a_w}$, which is well defined for $w\in \D$  and $\lambda(w)\to 0$ as $w\to 0$ by the previous remark. 

By hypothesis, $k_{\mathbb B^2} ((0,w); \varphi_{(0,w)}'(0)) = k_{\B^2}(F((0,w)); dF_{(0,w)}(\varphi_{(0,w)}'(0)))$. Therefore, for all $w\in \D$,
\begin{equation}\label{Eq1}
k_{\mathbb B^2}((0,w); (1,\lambda(w)))=k_{\B^2}(F((0,w)); dF_{(0,w)}(1,\lambda(w))).
\end{equation}
Let $u:\D\times \C\to \R$ be defined as
\[
u(w,\lambda) :=  \left(k_{\mathbb B^2}((0,w); (1,\lambda))\right)^2 -  \left( k_{\B^2}(F((0,w)); dF_{(0,w)}(1,\lambda))\right)^2.
\]
Since $F$ does not increase the Kobayashi metric, $u(w,\lambda)\leq 0$, and, by \eqref{Eq1}, $u(w,\lambda(w))\equiv 0$ for $w\in \D$. Hence, $v(w,\lambda):=\frac{\partial u(w,\nu)}{\partial \overline{\nu}}|_{\nu=\lambda}$ has the property that $v(w,\lambda(w))=0$ for $w\in \D$. 

Now, denoting by $\langle \cdot, \cdot\rangle$ the usual Hermitian product in $\C^2$, we have
\begin{equation*}
\begin{split}
u(w,\lambda)&=\frac{1}{(1-|w|^2)^2}[|\lambda|^2|w|^2+(1-|w|^2)(1+|\lambda|^2)]\\&-\frac{1}{(1-|F(0,w)|^2)^2}[|\langle F(0,w), dF_{(0,w)}(1,\lambda)\rangle|^2+(1-|F(0,w)|^2)|dF_{(0,w)}(1,\lambda)|^2].
\end{split}
\end{equation*}
Hence, $v(w,\lambda)=A(w)\lambda+B(w)$
 for some analytic functions $A, B: \D \to \C$. Here analytic means that the real and imaginary parts of $A, B$ are real analytic functions. Moreover,
\[
\frac{\partial u(w,\nu)}{\partial \nu}|_{\nu=\lambda}=\overline{A(w)} \overline{\lambda}+\overline{B(w)}.
\]
 If $A(w_0)=0$ for some $w_0\in \D$, since $\lambda=\lambda(w_0)$ is a solution of $v(w_0,\lambda)=0$, it turns out that also $B(w_0)=0$, and hence $v(w_0, \lambda)=0$---and  $\frac{\partial u(w_0,\nu)}{\partial \nu}|_{\nu=\lambda}= 0$---for all $\lambda\in \C$. 
 
Let $\lambda\neq 0$ and consider the real function $f(t):=u(w_0, t\lambda(w_0))$. Then for all $t$,
\[
f'(t)=\frac{\partial u(w_0,\nu)}{\partial \nu}|_{\nu=t\lambda}\lambda+\frac{\partial u(w_0,\nu)}{\partial \overline{\nu}}|_{\nu=t\lambda}\overline{\lambda}=0.
\]
Hence, $f(t)=f(0)=0$ for all $t$. But this means that $F$ acts as an isometry on all complex geodesic through $w_0$ with direction $(1,t\lambda)$ for all $t$. By Lemma~\ref{Cart-ball}, $F$ is an automorphism, contradicting our assumption. Therefore, $A(w)\neq 0$ for all $w\in \D$ and then $\lambda(w)=-B(w)/A(w)$ is analytic on $w\in \D$ for all $w\in \D$.
 
Let $\Psi: \D\times \C \to \C^2$ be defined as $\Psi(w,\zeta):=(\zeta, w+\zeta\lambda(w))$. Let $\rho\in (0,1)$ and Let  $U_\rho:= \Psi(\rho \D,\C)$. Since the image of any complex geodesic of $\B^2$ is the intersection of $\B^2$ with an affine complex line, we see that 
\[
U_\rho\cap \B^2= \bigcup_{\varphi\in \mathcal F, \varphi(\D)\cap \{(0,w): |w|<\rho\}\neq \emptyset} \varphi(\D).
\]
We claim that  there exists $\rho\in (0,1)$ and $p\in\partial \B^2$ such that $U_\rho$ is an open neighborhood of $p$. 

Assuming the claim for the moment, we have that the collection of $\varphi\in \mathcal F$ which intersect the compact set  $\{(0,w) : |w|\leq \rho\}$ and the open set $U_\rho$ satisfy the hypotheses of Proposition~\ref{prop-analytic}. Hence,  $F$ is an automorphism.

In order to prove the claim, for $w\in \D$ fixed, let $S(w)\subset \C$ be such that $\Psi(w,\zeta)\in \partial \B^2$ for $\zeta\in S(w)$. We show that there exist  $w\in\D$ and $\zeta\in S(w)$ so that  $\Psi$ is a local diffeomorphism at $(w,\zeta)$. 
 
To this aim, let  $J(w,\zeta)$ be the real Jacobian of $\Psi$ at $(w,\zeta)$. We prove, and it is enough, that there exists $w\in \D$ and $\zeta\in S(w)$ such that $J(w,\zeta)\neq 0$. Assume by contradiction that  $J(w,\zeta)= 0$ for all $w\in \D$ and $\zeta\in S(w)$. A direct computation shows that 
 \begin{equation}\label{Jac-2d}
 0\equiv J(w,\zeta)=\left|1+\zeta \lambda_w\right|^2-\left|\zeta \lambda_{\overline{w}}\right|^2,
 \end{equation}
 where  $\lambda_w:=\frac{\partial \lambda}{\partial w}$ and $\lambda_{\overline{w}}:=\frac{\partial \lambda}{\partial \overline{w}}$. Therefore, we have the following system:
\begin{equation*}
\begin{cases}
 |\zeta|^2(1+|\lambda|^2)+|w|^2+2\Re (\zeta \lambda \overline{w})&=1,\\
 |\zeta|^2(|\lambda_w|^2-|\lambda_{\overline{w}}|^2)+1+2\Re(\zeta\lambda_{w})&=0.
\end{cases}
\end{equation*}
Hence, for $\zeta\in S(w)$,
\[
\frac{1-|w|^2}{1+|\lambda|^2}-\frac{1}{|\lambda_{\overline{w}}|^2-|\lambda_w|^2}=2\Re\left(\frac{\zeta \lambda \overline{w}}{1+|\lambda|^2}+\frac{\zeta\lambda_{w}}{|\lambda_{\overline{w}}|^2-|\lambda_w|^2}   \right).
\]
Since the left-hand side does not depend on $\zeta$, the only possibility is
\begin{equation}\label{L-sys}
\begin{cases}
1-|w|^2&=\frac{1+|\lambda|^2}{|\lambda_{\overline{w}}|^2-|\lambda_w|^2},\\
\lambda_{w}&= \frac{(|\lambda_w|^2-|\lambda_{\overline{w}}|^2)}{1+|\lambda|^2}\lambda \overline{w},
\end{cases}
\end{equation}  
that is,
\begin{equation*}
\lambda_w=-\frac{\overline{w}}{1-|w|^2}\lambda.
\end{equation*}
 Therefore,  
\[
\lambda(w)=\overline{\alpha(w)}(1-|w|^2),
\]
for some holomorphic function $\alpha$ defined on $\D$ such that $\alpha(0)=0$. Hence,
\[
|\lambda_{\overline{w}}|^2-|\lambda_w|^2=(1-|w|^2)[|\alpha'(w)|^2(1-|w|^2)-2\Re(w\overline{\alpha(w)}\alpha'(w))].
\]
Substituting in the first equation of \eqref{L-sys} we obtain
\[
(1-|w|^2)^2[|\alpha'(w)|^2(1-|w|^2)-|\alpha(w)|^2-2\Re(w\overline{\alpha(w)}\alpha'(w))]=1,
\]
that is, for all $w\in \D$,
\begin{equation}\label{ineq1}
|\alpha'(w)|^2=|\alpha'(w)w+\alpha(w)|^2+\frac{1}{(1-|w|^2)^2}\geq \frac{1}{(1-|w|^2)^2}.
\end{equation}
This implies that $\alpha':\D \to \C$ is a holomorphic and proper function. But this is not possible, and we reach a contradiction.  
\end{proof}

Now we pass to the general case, proving first a lemma. As a matter of notation, for $\epsilon>0$ and $D\subset \C^n$ a domain, we let
\begin{equation}\label{tub-n}
N_\epsilon(D):=\{z\in D: \hbox{dist}(z,\partial D)<\epsilon\}.
\end{equation}

\begin{lemma}\label{non-dic-close-bd}
Let $D, G, \mathcal F, F$ satisfy the hypotheses of Theorem~\ref{main2-intro}. Then there exists $\epsilon>0$ such that  every $w\in N_\epsilon(G)\cap F(D)$ is a regular value of $F$ ({\sl i.e.}, $\det dF_z\neq 0$ for every $z\in D$ such that $F(z)=w$).
\end{lemma}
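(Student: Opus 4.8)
The plan is to argue by contradiction, combining a normal-families/stability argument for complex geodesics with the slice rigidity property in $\B^2$ (Proposition \ref{ball}) after a double scaling. Suppose no such $\epsilon$ exists. Then there is a sequence $w_k \in G$ with $\mathrm{dist}(w_k,\partial G)\to 0$, points $z_k \in D$ with $F(z_k)=w_k$ and $\det dF_{z_k}=0$. Since $F$ does not increase the Kobayashi metric and since each point of $D$ lies on some $\varphi\in\mathcal F$, the map $F$ sends complex geodesics of $\mathcal F$ to complex geodesics of $G$; in particular $F$ is "an isometry along $\mathcal F$'' in the precise sense used above. Because $w_k\to\partial G$, the complex geodesics $F\circ\varphi$ through $w_k$ (for $\varphi\in\mathcal F$ passing through $z_k$) must have $z_k$ escaping to $\partial D$ as well: otherwise, along a subsequence, $z_k\to z_\infty\in D$, the geodesics through $z_k$ would (up to reparametrization) subconverge to a complex geodesic through $z_\infty$, forcing $F(z_k)$ to stay in a compact subset of $G$, a contradiction. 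So we may assume $z_k\to p\in\partial D$ and $w_k\to q\in\partial G$, and (passing to a subsequence) $\pi(w_k)\to q$ non-tangentially after an initial adjustment.

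Next I would localize and scale. Apply an automorphism-type scaling of $D$ fixing $p$ — here using the strong convexity and the $C^3$ (or smooth) boundary representation of Chang–Hu–Lee to conjugate $(D,p)$ to $(\B^2,e_1)$ in the limit; after this scaling the complex geodesics of $\mathcal F$ through $z_k$ and those through a fixed nearby point produce, in the limit, an open family of complex geodesics of $\B^2$ through points near $e_1$, so that the hypothesis of Proposition \ref{prop-analytic}/Theorem \ref{main-ball} is met by the limiting family. Simultaneously scale $G$ near $q$ using the squeezing-function scaling along the non-tangential sequence $\pi(w_k)$, sending $(G,\pi(w_k))$ to $(\B^2,0)$ in the limit. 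Normalizing so that these scalings send $z_k\mapsto 0$-ish and $w_k\mapsto 0$, the composed maps $F_k := (\text{scaling of }G)\circ F\circ (\text{scaling of }D)^{-1}$ form a normal family of holomorphic self-maps of shrinking domains exhausting $\B^2$, hence subconverge to a holomorphic self-map $\widehat F:\B^2\to\B^2$, or possibly a map to a boundary point; the isometry-along-geodesics property passes to the limit because reparametrized geodesics converge uniformly on $\overline\D$ (Corollary 2.3 of \cite{BPT}), so $\widehat F$ is a non-constant self-map of $\B^2$ acting isometrically on the limiting open family of complex geodesics. By Proposition \ref{ball} (or Theorem \ref{main-ball}), $\widehat F$ is an automorphism of $\B^2$, in particular $\det d\widehat F$ never vanishes.

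Finally I would derive the contradiction from the degeneracy $\det dF_{z_k}=0$. The two scalings are affine (or asymptotically affine) biholomorphisms, so the real Jacobian determinant of $F_k$ at the image of $z_k$ is $\det dF_{z_k}$ times a product of the Jacobian factors of the two scalings. One checks — this is the point where the non-tangential approach of $w_k$ to $q$ and the precise normalization of the squeezing scaling are used — that these Jacobian factors stay comparable to a fixed constant in modulus (the scalings are uniformly bi-Lipschitz near the relevant points in the rescaled pictures), so $\det d F_k$ at the corresponding point still vanishes; passing to the limit gives $\det d\widehat F = 0$ at an interior point of $\B^2$, contradicting that $\widehat F \in \mathrm{Aut}(\B^2)$. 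Hence the desired $\epsilon>0$ exists.

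\textbf{Main obstacle.} The delicate step is controlling the Jacobian factors of the two scalings — ensuring that the degeneracy of $dF$ is not washed out in the limit. This requires the non-tangential convergence $\pi(w_k)\to q$ (so the squeezing scaling of $G$ has bounded distortion at the scale of $w_k$) together with a careful choice of the sequence $z_k$ and of the localizing scaling of $D$ at $p$, so that both pre- and post-composition factors in the chain rule for $\det dF_k$ remain bounded away from $0$ and $\infty$. Verifying that the limiting family of complex geodesics in $\B^2$ is genuinely open (so Proposition \ref{prop-analytic} applies) — rather than degenerating to a lower-dimensional family — is the secondary technical point, handled by using two distinct families of geodesics (through $z_k$ and through a fixed interior point) whose directions remain transverse in the limit.
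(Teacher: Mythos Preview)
Your contradiction-plus-double-scaling strategy matches the paper's, but you have made the scalings far more elaborate than needed and in doing so manufactured an obstacle that is not really there. The paper does not scale $D$ at the boundary point $p$ via a Chang--Hu--Lee type localization, nor does it scale $G$ along a non-tangential projection $\pi(w_k)$. Instead it simply applies the squeezing function at the interior points $z_k$ and $w_k$ themselves. Because $z_k\to\partial D$ and $w_k\to\partial G$, the squeezing constants tend to $1$, so one obtains global biholomorphisms $\Phi_k:D\to D_k$ and $\Psi_k:G\to G_k$ with $\Phi_k(z_k)=0$, $\Psi_k(w_k)=0$ and $r_k\B^2\subset D_k,G_k\subset\B^2$, $r_k\to1$. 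Then $F_k:=\Psi_k\circ F\circ\Phi_k^{-1}$ satisfies $F_k(0)=0$, and the chain rule gives $\det d(F_k)_0=\det(d\Psi_k)_{w_k}\cdot\det(dF)_{z_k}\cdot\det(d\Phi_k^{-1})_0=0$ because the middle factor vanishes; the outer factors are automatically nonzero (the $\Phi_k,\Psi_k$ are biholomorphisms) and require no further control. A Montel limit $H:\B^2\to\B^2$ then has $H(0)=0$ and $\det dH_0=0$.

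Your ``Main obstacle'' is therefore a phantom: to preserve a zero of $\det dF$ under pre- and post-composition by biholomorphisms one needs only that their Jacobians be nonzero, not that they be bounded. The genuine requirement is that the scaled image of $z_k$ converge to an interior point of the limit domain, and squeezing at $z_k$ pins it to $0$ for free. Your secondary worry---that the limiting family of geodesics might be merely ``open'' rather than complete---likewise disappears with this choice of scaling: since $\mathcal F$ is complete in $D$, each $\Phi_k\circ\mathcal F$ is complete in $D_k$, and a straightforward normal-families argument produces a complete limiting family in $\B^2$. Thus Proposition~\ref{ball} (not Theorem~\ref{main-ball} or Proposition~\ref{prop-analytic}) applies directly, yielding $H\in\Aut(\B^2)$ and the desired contradiction with $\det dH_0=0$.
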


\begin{proof}
We argue by contradiction. Assume the result is not true, we can find a sequence $\{w_n\}\subset G$ converging to the boundary of $G$ and a sequence $\{z_n\}\subset D$ such that $F(z_n)=w_n$ and $\det dF_{z_n}=0$. Since $F(D)\subset G$, the sequence $\{z_n\}$ cannot be relatively compact in $D$, and we can assume it converges to the boundary of $D$.

Let $s_D(z)$ denote the {\sl squeezing function of $D$ with respect to $z\in D$}, that is,
\begin{equation}
s_D(z):=\sup\{r>0:r\mathbb B^n\subset f(D),f:D\to \B^n \hbox{\ univalent}, f(z)=0\}.
\end{equation}
 Recall that, by  \cite{Die-For-Wold 2014}, $\lim_{z\to\partial D}s_D(z)=1$ for any strongly pseudoconvex domain $D\subset\mathbb C^n$ with $C^2$-smooth boundary  (see Theorem 4.1 in \cite{Kim-Zha 2016}).

Since $s_D(z_n) \to 1$, there exist biholomorphisms $\Phi_n:D\to \Phi_n(D)=:D_n$   such that $r_n \mathbb B^2\subset D_n\subset \B^2$, $\Phi_n(z_n) =0$, where $r_n>0$ and $r_n\to 1$ as $n\to \infty$. 

Similarly, there exist  biholomorphisms $\Psi_n:G\to \Psi_n(G)=:G_n$   such that $r_n \mathbb B^2\subset G_n\subset \B^2$, $\Psi_n(w_n) =0$, where $r_n>0$ and $r_n\to 1$ as $n\to \infty$.

Let $F_n:=\Psi_n\circ F\circ \Phi_n^{-1}: D_n \to G_n$. Note that $F_n(0)=0$ and $\det d(F_n)_0=0$. Therefore, a Montel's argument implies that, up to subsequences, $\{F_n\}$ converges on compacta to a holomorphic map $H:\B^2\to \B^2$ such that $H(0)=0$ and $\det dH_0=0$. 

 Let $\mathcal F_n:=\Phi_n\circ \mathcal F$. Then $\mathcal F_n$ is a complete collection of complex geodesics of $D_n$ for all $n$.  Take $z\in \B^2$. For $n$ sufficiently large, $z\in D_n$. Hence, there exists a complex geodesic $\varphi_n\in \mathcal F_n$ such that $z\in \varphi_n(\D)$. Up to re-parameterization, we can assume $\varphi_n(0)=z$ for all $n$ (sufficiently large). Up to extracting subsequences, we can assume that the sequence $\{\varphi_n\}$ converges to a holomorphic map $\varphi:\D \to \B^2$. Since $\{D_n\}$ exhausts $\B^2$, it follows that $\varphi$ is a complex geodesic of $\B^2$ and the collection $\mathcal H$ of complex geodesics $\varphi$ of $\B^2$ constructed in this way is complete. Moreover, since  $F_n\circ \varphi_n$ is a complex geodesic of $G_n$ for all $n$, by the same argument, we obtain that
 $H\circ \varphi$ is a complex geodesic of $\B^2$. Therefore, $H:\B^2\to \B^2$ is a holomorphic map which acts as an isometry on the complete collection $\mathcal H$ of complex geodesics of $\B^2$, and by Proposition~\ref{ball}, $H$ is an automorphism and in particular $\det dH_0\neq 0$.  A contradiction, and we are done. 
\end{proof}

\begin{proof}[Proof of Theorem \ref{main2-intro}]
Let $F:D\to G$ be holomorphic and acting as an isometry on a complete collection of complex geodesics $\mathcal F$ of $D$. 

 Since $G$ is homeomorphic to the ball, we can find an exhaustion of compact sets $\{K_n\}$ of $G$ such that $G\setminus K_n$ is simply connected. 
 
We first show that there exists $n\in \N$ such that $G\setminus K_n\subset F(D)$. 

Assume by contradiction this is not true. Then we can find a sequence $\{w_n\}\subset G\setminus K_n$ such that $w_n\not\in F(D)$ and $w_n$ converges to some boundary point of $G$. 

Let $\varphi\in \mathcal F$. Since $F\circ \varphi$ is a complex geodesic in $G$, it follows that $F(\varphi(\D))\cap (G\setminus K_n)\neq \emptyset$, for every $n$. Hence, for every $n$ we can find $z_n^0$ such that  $F(z_n^0)\in G\setminus K_n$. Let $\gamma_n:[0,1]\to G\setminus K_n$ be a continuous curve such that $\gamma_n(0)=F(z_n^0)$ and $\gamma_n(1)=w_n$. 

If $n$ is sufficiently big,  by Lemma~\ref{non-dic-close-bd}, $F$ is a local biholomorphism at $z_n^0$, so there exists a continuous curve $\eta_n:[0, t_0^n)\to D$, $0<t_0^n\leq 1$ such that $F(\eta_n(t))=\gamma_n(t)$ for all $t\in [0, t_0^n)$. There are two cases: either the cluster set of $\eta_n$ at $t_0^n$ is contained in $\partial D$ or it contains a point $z\in D$. In the latter case, it follows that $F(z)=\gamma_n(t_0^n)$, but then, again by by Lemma~\ref{non-dic-close-bd}, $dF_z$ is invertible and the path $\gamma_n$ can be lifted through $\gamma_n(t_0^n)$. Therefore, if the first case does not occur, $\eta_n$ is well defined on $[0,1]$ and $F(\eta_n(1))=w_n$. 

Hence, we can assume that  the cluster set of $\eta_n(t)$ for $t\to {t^n_0}^-$ is contained in $\partial D$ and $F\circ \eta_n(t)=\gamma_n(t)$ for $t\in [0,t^n_0)$. 

Since the squeezing function of $G$ is uniform (see Theorem 4.1 in \cite{Kim-Zha 2016}), for every $r\in (0,1)$ there exists $\delta>0$ such that, for each $w\in G$ so that $\hbox{dist}(w, \partial G)<\delta$ there exists a univalent map $\Psi_w:G\to \B^2$ with the property that $\Psi_w(w)=0$ and $r\B^2\subset \Psi_w(G)$. 

Thus, for every $n\in \N$ there exists $m_n\in \N$ such that for every $w\in G\setminus K_{m_n}$, $(1-\frac{1}{n})\B^2\subset \Psi_w(G)$. In order to avoid using double indices, we can relabelling $K_n$ so that $m_n=n$.

Fix $\delta\in (0,\frac{1}{4})$. We claim that for $n\in \N$, there exists $s_n\in (0, t^n_0)$  such that 
\begin{equation}\label{eq-all-rest}
\Psi_{\gamma_n(s)}(\gamma_n(r))\in \delta \B^2
\end{equation}
 for all $s_n\leq s\leq r\leq  t^n_0$. 
 
 In order to prove \eqref{eq-all-rest}, fix $n\in \N$. Let $R>0$ and let $B^K(\gamma_n(t^n_0), R)$ be the Kobayashi ball in $G$ of center $\gamma_n(t^n_0)$ and radius $R$. Let $s_n\in (0, t^n_0)$ be such that $\gamma_n(s)\in B^K(\gamma_n(t^n_0), R)$ for all $s\in [s_n, t^n_0]$. By the triangle inequality,  for all $u,v \in [s_n, t^n_0]$, $\gamma_n(u)\in B^K(\gamma_n(v), 2R)$. Let $s\in [s_n, t^n_0]$. Since $\Psi_{\gamma_n(s)}$ is a biholomorphism on its image, it follows that $\Psi_{\gamma_n(s)}(B^K(\gamma_n(s), 2R))$ is a Kobayashi ball in $\Psi_{\gamma_n(s)}(G)$, centered at $0$ and with (hyperbolic) radius $2R$. Taking into account that $\Psi_{\gamma_n(s)}(G)\subset \B^2$ and the monotonicity of the Kobayashi distance, it follows that $\Psi_{\gamma_n(s)}(B^K(\gamma_n(s), 2R))$ is contained in the Kobayashi ball of $\B^2$ of center $0$ and radius $2K$. Since Kobayashi balls in $\B^2$ centered at $0$ are just balls, by taking $K$ sufficiently small we are done.

Using the same uniform squeezing argument  in $D$, and since $\eta_n(t)\to \partial D$ as $t\to t^n_0$, we can find $t_n\in [s_n, t^n_0)$ such that there exists a univalent map $\Phi_n:D \to \B^2$ so that $\Phi_n(\eta_n(t_n))=0$ and $(1-\frac{1}{n})\B^2\subset D_n:=\Phi_n(\B^2)$. 

Let $\Psi_n:=\Psi_{\gamma_n(t_n)}$, $G_n:=\Psi_n(G)$. Let $F_n:=\Psi_n \circ F \circ \Phi_n^{-1}: D_n\to G_n$. Hence, $\{F_n\}$ is a sequence of holomorphic maps such that $F_n(0)=0$ and $(1-1/n)\B^2\subset D_n\subset \B^2$, $(1-1/n)\B^2 \subset G_n\subset \B^2$. 

By Montel's theorem, we can assume that $\{F_n\}$ converges uniformly on compacta to a map $H:\B^2\to \B^2$. 

Arguing as in  the proof of Lemma~\ref{non-dic-close-bd}, we see that $H$ acts as an isometry on a complete collection of complex geodesics of $\B^2$, hence, by Proposition~\ref{ball}, it is an automorphism of $\B^2$ such that $H(0)=0$. Therefore, since $|H(z)|=|z|$ for all $z\in \B^2$, we deduce that for $n$ sufficiently large, $|F_n(z)|>1/2$ for all $z\in \B^2$ such that $|z|=2/3$. However, since $\eta_n(t)\to \partial D$ as $t\to t^n_0$, for every $n$ we can find $r_n\in (t_n, t^n_0)$ such that $|\Phi_n(\eta_n(r_n))|=2/3$. But, by \eqref{eq-all-rest}, we have $F_n(\Phi_n(\eta_n(r_n)))\in \delta\B^2$, obtaining a contradiction. 

Therefore, for $n$ sufficiently large, $G\setminus K_n\subset F(D)$ and  for every $z\in D$ such that $F(z)\in G\setminus K_n$, $F$ is a local biholomorphism. The previous argument shows also that every continuous path $\gamma:[0,1]\to G\setminus K_n$ can be lifted by $F$ to a continuous path in $D$. Since $G\setminus K_n$ is simply connected, this allows to define a holomorphic map $F^{-1}: G\setminus K_n \to D$, which has the property that $F\circ F^{-1}={\sf id}$. By Hartogs' extension theorem, the map $F^{-1}$ extends to all $G$, and we are done.
\end{proof}

\section{The proof of Theorem~\ref{main-intro}}

We need a couple of preliminary lemmas.

\begin{lemma}\label{Lem-bd-geo-dom}
Let $D\subset \C^n$ be a smooth bounded strongly (linearly) convex domain. Let $p\in \partial D$ and let $\varphi$ be a complex geodesic in $D$ such that $\varphi(1)=p$. Let $\epsilon>0$. Then for any complex geodesic $\psi$ of $D$ such that $\psi(1)=p$ there exists $t_0=t_0(\varphi,\psi,\epsilon) \in (0,1)$ such that for every $t\in (t_0, 1)$     there exists $s_t\in (0,1)$ so that
\[
K_{D}(\varphi(t), \psi(s_t))\leq \epsilon.
\]
\end{lemma}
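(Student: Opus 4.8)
The plan is to normalize the parametrizations so that $\varphi(1)=\psi(1)=p$ (legitimate since, by Lempert's theorem \cite{Lem 1981,Lem 1984}, complex geodesics of a smooth bounded strongly convex domain extend smoothly to $\overline{\D}$, and a standard isometry argument shows $\varphi(\partial\D),\psi(\partial\D)\subset\partial D$), and then, for $t$ close to $1$, to choose $s_t\in(0,1)$ so that $\psi(s_t)$ has the \emph{same} Euclidean distance to $\partial D$ as $\varphi(t)$. The point of this choice is that it forces the displacement $\varphi(t)-\psi(s_t)$ to be, to leading order in $1-t$, complex tangential at $p$; and a point at Euclidean height $h$ above a strongly convex boundary can be moved a Euclidean distance of order $\sqrt{h}$ into $D$ along a complex tangential complex line, so such a displacement has Kobayashi size $O(\sqrt{h})\to0$.

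Concretely, let $\nu_p$ be the outer unit normal at $p$ and $\langle\cdot,\cdot\rangle$ the standard Hermitian product. Since complex geodesics of a strongly convex domain meet $\partial D$ transversally (e.g.\ by Hopf's lemma applied to $r\circ\varphi$, with $r$ a plurisubharmonic local defining function), $a:=\Re\langle\varphi'(1),\nu_p\rangle>0$ and $\delta_D(\varphi(t))=a(1-t)+o(1-t)$; similarly $b:=\Re\langle\psi'(1),\nu_p\rangle>0$ and $\delta_D(\psi(s))=b(1-s)+o(1-s)$. Differentiating the identity $r(\varphi(e^{i\theta}))\equiv0$ at $\theta=0$ gives $\Im\langle\varphi'(1),\nu_p\rangle=0$, hence $\langle\varphi'(1),\nu_p\rangle=a$, and likewise $\langle\psi'(1),\nu_p\rangle=b$. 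Now pick $s_t\in(0,1)$ with $\delta_D(\psi(s_t))=\delta_D(\varphi(t))$ and $s_t\to1$ as $t\to1$ (possible for $t$ near $1$ by the intermediate value theorem, taking the largest such $s_t$); the asymptotics force $1-s_t=\tfrac{a}{b}(1-t)+o(1-t)$, so
\[
\varphi(t)-\psi(s_t)=(1-t)\,w+o(1-t),\qquad w:=\frac{a}{b}\,\psi'(1)-\varphi'(1),
\]
and $\langle w,\nu_p\rangle=\frac{a}{b}\,b-a=0$, i.e.\ $w\in T^{\C}_p\partial D$.

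To finish, write $\rho:=\delta_D(\varphi(t))$, so $\rho\asymp 1-t$, and let $\hat v$ be the unit vector in the direction $\psi(s_t)-\varphi(t)$, which tends to a complex tangential vector at $p$. By strong convexity there is a fixed $c>0$ such that, for $t$ close to $1$, the affine analytic disc $\zeta\mapsto\varphi(t)+c\sqrt{\rho}\,\zeta\,\hat v$ maps $\D$ into $D$; it sends $0$ to $\varphi(t)$ and $\lambda_t:=|\psi(s_t)-\varphi(t)|/(c\sqrt{\rho})$ to $\psi(s_t)$. Since $|\psi(s_t)-\varphi(t)|=|w|(1-t)+o(1-t)$ and $\rho=a(1-t)+o(1-t)$, we get $\lambda_t\asymp\sqrt{1-t}\to0$, hence
\[
K_D(\varphi(t),\psi(s_t))\le K_\D(0,\lambda_t)=\frac12\log\frac{1+|\lambda_t|}{1-|\lambda_t|}\longrightarrow 0\quad\text{as }t\to1,
\]
which yields the required $t_0=t_0(\varphi,\psi,\epsilon)$.

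I expect the main obstacle to be recognizing that matching the boundary distances is exactly what renders the leading-order displacement complex tangential, and in particular establishing the first-order identity $\langle\varphi'(1),\nu_p\rangle=a\in\R$, which comes from $\varphi(\partial\D)\subset\partial D$ together with the boundary regularity of $\varphi$; the subsequent inscribed-disc estimate, while elementary for strongly convex domains, still requires checking that an inscribed disc of radius of order $\sqrt{\rho}$ persists when $\hat v$ is only asymptotically (rather than exactly) complex tangential, so that the normal contributions to the displacement remain of strictly lower order.
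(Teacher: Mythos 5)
Your argument is correct in substance but takes a genuinely different route from the paper's. The paper inscribes a ball $\B^n\subset D$ internally tangent at $p=e_1$, projects both geodesics orthogonally onto the complex line $\D\times\{0\}$ through $p$, and concludes by combining Stolz-angle estimates in that disc (both projected curves approach $1$ non-tangentially, via the Chang--Hu--Lee parametrization) with the fact, quoted from \cite[Lemma 2.3]{BF}, that $K_{\B^n}(\eta(t),\pi(\eta(t)))\to 0$; monotonicity $K_D\le K_{\B^n}$ then finishes. You instead work directly in $D$: matching boundary distances $\delta_D(\psi(s_t))=\delta_D(\varphi(t))$ renders the displacement complex tangential to leading order, and an inscribed analytic disc of radius $\asymp\sqrt{\rho}$ in that direction gives the two-point estimate. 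Both proofs hinge on the same geometric input (the transversality $\langle\varphi'(1),\nu_p\rangle=a>0$, i.e.\ radial approach of geodesics, which you derive correctly from Hopf's lemma and $\varphi(\partial\D)\subset\partial D$); yours is more self-contained and quantitative, while the paper's delegates the tangential estimate to the inscribed ball and to \cite{BF}.

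Two points that you flag do need to be written out. First, containment of the disc $\zeta\mapsto\varphi(t)+c\sqrt{\rho}\,\zeta\,\hat v$ in $D$ requires $\epsilon_t:=|\langle\hat v,\nu_{\pi(\varphi(t))}\rangle|=O(\sqrt{\rho})$, not merely $o(1)$, since the disc sweeps a normal displacement of size $c\sqrt{\rho}\,\epsilon_t$ which must stay below $\rho$. This does hold here: geodesics of a smooth strongly convex domain are smooth up to $\oD$, so $1-s_t=\tfrac{a}{b}(1-t)+O((1-t)^2)$ and $\langle\psi(s_t)-\varphi(t),\nu_{\pi(\varphi(t))}\rangle=O((1-t)^2)$, whence $\epsilon_t=O(1-t)=O(\rho)$ when $w\ne0$; alternatively, shrink the radius to $c\min\bigl(\sqrt{\rho},\rho/\epsilon_t\bigr)$, which still dominates $|\psi(s_t)-\varphi(t)|=O(\rho)$. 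Second, the degenerate case $w=0$: by Chang--Hu--Lee uniqueness, $\psi'(1)\in\R_{>0}\,\varphi'(1)$ forces $\psi(\D)=\varphi(\D)$, and then $|\psi(s_t)-\varphi(t)|=o(1-t)=o(\rho)$, so the Euclidean ball $B(\varphi(t),\rho)\subset D$ already yields $K_D(\varphi(t),\psi(s_t))\le\tanh^{-1}(o(1))\to0$. With these two additions the proof is complete.
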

\begin{proof} We can assume that $\B^n\subset D$ and that $p=e_1=(1,0,\ldots, 0)$. Let $\pi:\C^n\to \D\times\{0\}$ be the orthogonal projection. 

For $M>1$, let 
\[
S(M):=\{\zeta\in \D: |1-\zeta|\leq M(1-|\zeta|)\},
\]
a {\sl Stolz angle} of vertex $1$ and amplitude $M$. 

In the sequel we will use the Chang-Hu-Lee parametrization \cite{Cha-Hu-Lee 1988}: let 
\[
L_{e_1}:=\{v\in \C^n: |v|=1, v_1>0\}. 
\]
Chang, Hu and Lee proved that every complex geodesic $\eta$ of $D$ whose closure contains $p$ can be parameterized in such a way that $\eta(1)=e_1$ and $\eta'(1)=v_1 v$ with $v\in L_{e_1}$. 

In the rest of the proof we denote by $\mathcal C_{e_1}$ the set of all complex geodesics in $D$ parametrized as above.

Note that  $\eta(t)$ converges non-tangentially to $e_1$ (by the previous parametrization or by Hopf's Lemma) for all $\eta\in \mathcal C_{e_1}$. In particular, $\eta(t)$ is eventually contained in $\B^n$.

Also, note that $\pi(\eta'(t))\to v_1^2>0$ as $t\to 1$ for all $\eta\in \mathcal C_{e_1}$, for some $v\in L_{e_1}$. This implies immediately that for every $M>1$ and for every $\eta\in \mathcal C_{e_1}$ there exists $t_0\in [0,1)$ such that  $\pi(\eta(t))$ belongs to $S(M)\times\{0\}$ for $t\in [t_0, 1)$.

Now, a curve $\gamma(t)\subset \D$ converging to $1$ as $t\to 1$ is eventually contained in $S(M)$ if and only if there exists $R=R(M)>0$ such that for every fixed $r\in [0,1)$ there exists $t_r\in (0,1)$ so that $K_\D(\gamma(t), [r,1))\leq R$ for $t\in [t_r,1)$ (see \cite[Section 6.2]{BCD}), and $R\to 0$ as $M\to 1$. Therefore,  it follows that for every $\delta>0, r\in (0,1)$ and $\eta\in \mathcal C_{e_1}$ there exists $t_1\in [0,1)$ so that for all $t\in [t_1,1)$,
\begin{equation}\label{Eq:B1-bdg}
K_{\B^n}(\pi(\eta(t)), [r,1))=K_{\D\times\{0\}}(\pi(\eta(t)), [r,1))\leq \delta.
\end{equation}
Note that, since the curve $\pi(\eta(t))$ is continuous and converges to $e_1$, then for every $t_2\in (0,1)$ there exists some $r_1\in [r,1)$ so that for every $s\in [r_1,1)e_1$ there exists $t_s\geq t_2$ such that 
\begin{equation}\label{Eq:B1-bdg-rev}
K_{\B^n}(se_1, \pi(\eta(t_s)))=K_{\D\times\{0\}}(se_1, \pi(\eta(t_s)))\leq \delta.
\end{equation}

Finally, since $\eta(t)$ is (eventually) contained in $\B^n$ and converges to $e_1$ non-tangentially,  by \cite[Lemma 2.3]{BF},
\begin{equation}\label{Eq-proj-0}
\lim_{t\to 1}K_{\B^n}(\eta(t), \pi(\eta(t)))=0.
\end{equation} 

Now, let $\varphi, \psi\in \mathcal C_{e_1}$ and $\epsilon>0$. For $t, s$ sufficiently close to $1$ so that $\psi(s), \varphi(t)\in \B^n$, we have
\begin{equation*}
\begin{split}
K_D(\varphi(t), \psi(s))&\leq K_{\B^n}(\varphi(t), \psi(s))\leq \\& K_{\B^n}(\varphi(t), \pi(\varphi(s)))+K_{\B^n}(\psi(t), \pi(\psi(s)))+K_{\B^n}(\pi(\varphi(t)), \pi(\psi(s))).
\end{split}
\end{equation*}
The first two terms in the right side of the inequality are as small as we want for $s, t$ close to $1$, say they are each less than $\epsilon/4$ for $t\geq t'$ and $s\geq s'$ for some $t',s'\in (0,1)$. As for the term $K_{\B^n}(\pi(\varphi(t)), \pi(\eta(s)))$,  we first find $r_1$ so that \eqref{Eq:B1-bdg-rev} holds for $\delta=\epsilon/4$, $\eta=\psi$  and $t_2=s'$, and then we choose $t_0\geq t'$  so that \eqref{Eq:B1-bdg} holds for $\delta=\epsilon/4$, $\eta=\varphi$ and $r=r_1$. 

Hence, given $t\geq t_0$, we can find $s\in [r_1, 1)$ so that $K_{\B^n}(\pi(\varphi(t)), se_1)\leq \epsilon/4$. Then we can find $s_t\geq s'$ such that $K_{\B^n}(\pi(\eta(s_t)), se_1)\leq \epsilon/4$. By the triangle inequality, we have the statement.
\end{proof}

\begin{remark}\label{cono}
The previous proof (and \cite[Lemma 2.3]{BF}) shows that the if $\{\psi_k\}$ is a collection of complex geodesics in $D$ such that $\psi_k(1)=p$ for all $p$ and $\psi_k'(1)$ belongs to a fixed cone of vertex $p$ and amplitude less than $\pi/2$ then number $t_0$ can be chosen independently of $\psi_k$. 
\end{remark}

\begin{remark}\label{cono2}
The previous proof (and \cite[Lemma 2.3]{BF}) also shows that the if $\gamma:[0,1)\to D$ is a continuous curve such that $\gamma$ converges non-tangentially to $p$ as $t\to 1$ then there exists $C_2>0$ such that for every $s\in [0,1)$ there exists $t_s\in [0,1)$ such that $K_D(\gamma(s), \varphi(t_s))<C_2$.
\end{remark}

\begin{lemma}\label{Lem:control-dist}
Let $D\subset \C^n$ be a smooth bounded strongly (linearly) convex domain. Let $p\in \partial D$ and let $\varphi_0$ be a complex geodesic in $D$ such that $\varphi_0(1)=p$. Let $\gamma:[0,1)\to D$ be a continuous curve such that $\lim_{t\to 1}\gamma(t)=p$ and assume that there exists $C>0$ such that for all $t\in [0,1)$,
\begin{equation}\label{Eq:control-nontg}
K_D(\gamma(t), \varphi_0([0,1))\leq C.
\end{equation}
Finally, let $\{\eta_k\}$ be a sequence of complex geodesics in $D$ such that $\eta_k(1)=p$ for all $k$ and $\{\eta_k\}$ converges uniformly on $\oD$ to a complex geodesic $\eta$ of $D$. Then, there exists $M>0$ such that for every $t\in [0,1)$ and every $k\in \N$ there exists $\xi_{t,k}\in (0,1)$ such that
\[
K_D(\eta_k(\xi_{t,k}), \gamma(t))\leq M.
\]
\end{lemma}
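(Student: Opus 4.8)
The plan is to split the parameter range $t\in[0,1)$ into a neighbourhood of $1$, where the asymptotic estimates of Lemma~\ref{Lem-bd-geo-dom} force $\gamma$ and every $\eta_k$ to be uniformly close in the Kobayashi distance, and a compact range bounded away from $1$, where a uniform bound is automatic by compactness.

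First I would pick, for each $t\in[0,1)$, a parameter $\sigma_t\in[0,1)$ with $K_D(\gamma(t),\varphi_0(\sigma_t))\le C+1=:C'$; such $\sigma_t$ exists by \eqref{Eq:control-nontg}. The first point to establish is that $\sigma_t\to1$ as $t\to1$. If not, there are $\delta>0$ and $t_j\to1$ with $\sigma_{t_j}\le1-\delta$, so the points $\varphi_0(\sigma_{t_j})$ stay in the compact set $\varphi_0([0,1-\delta])\subset D$, while $\gamma(t_j)\to p\in\partial D$. Since $D$ is bounded convex, hence complete hyperbolic, the Kobayashi distance from a sequence escaping to $\partial D$ to a fixed compact subset of $D$ tends to $\infty$; thus $K_D(\gamma(t_j),\varphi_0(\sigma_{t_j}))\to\infty$, contradicting $K_D(\gamma(t_j),\varphi_0(\sigma_{t_j}))\le C'$.

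Next I treat $t$ near $1$. By the stability of complex geodesics of strongly convex domains \cite{Cha-Hu-Lee 1988}, the uniform convergence $\eta_k\to\eta$ on $\oD$ upgrades to $C^1$-convergence on $\oD$, so $\eta_k'(1)\to\eta'(1)\ne0$; hence all but finitely many of the directions $\eta_k'(1)$ lie in a fixed cone of vertex $p$ and amplitude less than $\pi/2$. Applying Lemma~\ref{Lem-bd-geo-dom} with $\varphi=\varphi_0$, $\psi=\eta_k$ and $\epsilon=1$ yields for each $k$ some $t_0^{(k)}\in(0,1)$; using Remark~\ref{cono} for the (infinitely many) geodesics whose direction lies in that cone, and the finitely many remaining geodesics one by one, I obtain $\hat t_0:=\sup_k t_0^{(k)}<1$, so that for every $k$ and every $t'\in(\hat t_0,1)$ there is $s(t',k)\in(0,1)$ with $K_D(\varphi_0(t'),\eta_k(s(t',k)))\le1$. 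Now choose $t_1\in(0,1)$ with $\sigma_t>\hat t_0$ for all $t\in[t_1,1)$ (possible since $\sigma_t\to1$). Then for $t\in[t_1,1)$ and any $k$, setting $\xi_{t,k}:=s(\sigma_t,k)\in(0,1)$, the triangle inequality gives
\[
K_D(\gamma(t),\eta_k(\xi_{t,k}))\le K_D(\gamma(t),\varphi_0(\sigma_t))+K_D(\varphi_0(\sigma_t),\eta_k(\xi_{t,k}))\le C'+1.
\]

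Finally, for $t\in[0,t_1]$ the set $\gamma([0,t_1])$ is a compact subset of $D$, and $\{\eta_k(1/2):k\in\N\}\cup\{\eta(1/2)\}$ is compact in $D$ (a convergent sequence together with its limit, all lying in $D$ since $|1/2|<1$). As $K_D$ is continuous on $D\times D$, the number $M_2:=\sup\{K_D(\gamma(t),\eta_k(1/2)):t\in[0,t_1],\,k\in\N\}$ is finite, and one takes $\xi_{t,k}:=1/2$ there. Then $M:=\max(C'+1,\,M_2)$ works for all $t\in[0,1)$ and all $k$. The only genuinely non-formal step is the one near $1$: turning the per-geodesic estimate of Lemma~\ref{Lem-bd-geo-dom} into an estimate uniform in $k$. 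This is exactly where stability of complex geodesics enters, through the fact that $\eta_k\to\eta$ forces the boundary directions $\eta_k'(1)$ eventually into a common cone of amplitude less than $\pi/2$, which is the hypothesis of Remark~\ref{cono}.
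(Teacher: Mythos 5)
Your proof is correct and follows essentially the same route as the paper's: combine Lemma~\ref{Lem-bd-geo-dom} with Remark~\ref{cono} to get a bound on $K_D(\varphi_0(\cdot),\eta_k(\cdot))$ that is uniform in $k$, then conclude via hypothesis \eqref{Eq:control-nontg} and the triangle inequality. You merely make explicit some details the paper leaves implicit, namely the compact range $t\in[0,t_1]$, the fact that $\sigma_t\to 1$, and the use of $C^1$-stability to place the directions $\eta_k'(1)$ in a common cone so that Remark~\ref{cono} applies.
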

\begin{proof}
By Lemma~\ref{Lem-bd-geo-dom} and Remark~\ref{cono}, there exists $C_1>0$ such that for all $t\in [0,1)$ and for all $k\in \N$ there exists $\xi_{t,k}\in (0,1)$ such that $K_D(\eta_k(\xi_{t,k}), \varphi(t))\leq C_1$. Hence, the statement follows at once  by \eqref{Eq:control-nontg} and the triangle inequality. 
\end{proof}

\begin{proof}[Proof of Theorem~\ref{main-intro}]
Let $F:D\to G$ be holomorphic and such that it acts as an isometry on $\mathcal F$. 
\medskip

{\sl Step 1.} There exists $p'\in\overline{ G}$ such that $p'\in \overline{F(\varphi(\D))}$ for all $\varphi\in \mathcal F$. Moreover, $p'\in \partial G$ if and only if $p\in\partial D$.

\smallskip

This is obvious if $p\in D$. If $p\in \partial D$, let $\varphi,\eta\in \mathcal F$ and assume, up to re-parametrization, that $\varphi(1)=\eta(1)=p$. 

Now, since by hypothesis $F\circ \eta$ is a complex geodesic in $G$, there exists $p'\in \partial G$ such that $\lim_{\zeta\to 1}F(\eta(\zeta))=F(\eta(1))=p'$ and similarly there exists $x\in \partial G$ such that $\lim_{\zeta\to 1}F(\varphi(\zeta))=F(\varphi(1))=x$. Fix $\epsilon>0$. Hence,  by Lemma~\ref{Lem-bd-geo-dom},
\[
\lim_{t\to 1}K_G(F(\varphi(t)), F(\eta(s_t)))\leq \lim_{t\to 1}K_D(\varphi(t), \eta(s_t))\leq \epsilon.
\]
By the standard estimates on the Kobayashi distance in strongly convex domains (see, {\sl e.g.} \cite{A2}), it follows at once that $s_t\to 1$ as $t\to 1$ and that $x=p'$. By the arbitrariness of $\eta, \varphi$, Step 1 is  proved.

\medskip

{\sl Step 2.} For every $q\in \partial D\setminus\{p\}$ there exists $q'\in \partial G\setminus\{p'\}$ such that $\lim_{z\to q}F(z)=q'$. 

\smallskip

Take a sequence $\{z_k\}\subset D$ converging to $q$ and consider complex geodesics $\varphi_k$ in $\mathcal F$ containing $z_k$ and $p$. Up to re-parametrization and extracting subsequences, we can assume that $\{\varphi_k\}$ converges uniformly on $\oD$ to a complex geodesic $\varphi_q$ of $D$ such that $p, q\in \varphi_q(\oD)$---and we can assume $\varphi_q(1)=q$ (see, {\sl e.g.}, \cite[Corollary 2.3]{BPT}). Note that $\varphi_q$ does not depend on the  chosen sequence $\{z_k\}$ because, up to parametrization, there exists a unique complex geodesic in $D$ whose closure contains $q$ and $p$. 

By hypothesis, $F\circ \varphi_k$ are complex geodesics in $G$, whose closures contain $p'$ for all $k$, by Step 1. 

We claim that $\{F\circ \varphi_k\}$ converges uniformly on $\oD$ to a complex geodesic in $G$---and, if this is the case, such a complex geodesic is clearly $F\circ \varphi_q$. 

In order to prove the claim, we first note that  the Euclidean diameter of $\{F(\varphi_k(\D))\}$ is uniformly bounded from below from zero. This is clear if $p\in D$. On the other hand, if $p\in\partial D$, and if the Euclidean diameter of some subsequence $\{F(\varphi_{k_m}(\D))\}$ tends to zero, then, taking into account that $p'\in F(\varphi_k(\oD))$ for all $k$, it follows from  \cite[Proposition 2.3]{BFW}  that $F(\varphi_{k_m}(\zeta))\to p'$ as $m\to \infty$, uniformly on $\oD$. But,
\[
K_G(F(\varphi_{k_m}(0)), F(\varphi_q(0))\leq K_D(\varphi_{k_m}(0), \varphi_q(0))\to 0, \quad m\to \infty,
\]
 which immediately  provides a contradiction. 
 
 Therefore, the Euclidean diameter of $\{F(\varphi_k(\D))\}$ is uniformly bounded from below from zero, and  by \cite[Proposition 1]{H} (see also \cite[Section 2]{BPT}) it follows that $\{F\circ \varphi_k\}$ converges uniformly to a complex geodesic of $G$, which is, in fact, $F\circ \varphi_q$.  

Now, let $q':=\lim_{\zeta \to 1}F(\varphi_k(\zeta))=F(\varphi_q(1))$. Note that $q'\neq p'$ because $F\circ \varphi_q$ is injective on $\oD$.

Hence,  there exists a sequence $\{\zeta_k\}\subset \D$ such that $\varphi_k(\zeta_k)=z_k$ and that $\zeta_k\to 1$ as $k\to \infty$. So  $F(z_k)=F(\varphi_k(\zeta_k))\to F(\varphi_q(1))=q'$ as $k\to \infty$ and this proves that $\lim_{z\to p}F(z)=q'$. 

\medskip

{\sl Step 3.} If $p\in \partial D$ and $\{z_k\}\subset D$ is a sequence converging to $p$ non complex tangentially ({\sl i.e.}, $\{\frac{p-z_k}{|p-z_k|}\}$ converges to a vector $v$ such that $\langle v, \nu_p\rangle\neq 0$, where $\nu_p$ is the outer normal unit vector of $\partial D$ at $p$), then $\lim_{k\to \infty}F(z_k)=p'$.

\smallskip

Seeking for a contradiction, we can assume that $\lim_{k\to \infty}F(z_k)=w_0\in G$. For every $k\in \N$, let $\varphi_k\in \mathcal F$ be such that $z_k\in \varphi_k(\D)$. By \cite[Lemma 4.3]{BST} the Euclidean diameter of $\{\varphi_k(\D)\}$ is bounded from below from zero. Hence, by \cite[Proposition 1]{H} (see also \cite[Section 2]{BPT}) we can assume that, up to reparametrization and extracting subsequences, the sequence $\{\varphi_k\}$ converges uniformly on $\oD$ to a complex geodesic $\varphi$ of $D$. In particular, there exists $\zeta_k\in \D$ such that $\varphi_k(\zeta_k)=z_k$. Since $\{\varphi_k\}$ converges uniformly on $\oD$ to $\varphi$, it is easy to see that $\{\zeta_k\}$ converges to a point $\xi\in \partial \D$ and that $\varphi(\xi)=p$. 

By the same token, since  $\{F\circ \varphi_k\}$ are complex geodesics in $G$ by hypothesis and they all intersect a compact subset of $G$ containing $w_0$, we see that $\{F\circ \varphi_k\}$ converges uniformly on $\oD$ to $F\circ \varphi$, which is a complex geodesic in $G$. Then, for all $k\in \N$
\[
K_\D(0,\zeta_k)=K_G(F(\varphi_k(0)), F(\varphi_k(\zeta_k)))=K_G(F(\varphi_k(0)), F(z_k)).
\]
Taking the limit for $k\to \infty$, we see that the left hand side term converges to $\infty$ while the right hand term converges to $K_G(F(\varphi(0)), w_0)<\infty$, a contradiction and Step 3 is proved.

\medskip

{\sl Step 4.} $F$ is proper.
\smallskip

If $p\in D$, then $F$ is proper by Step 2. Therefore, from now on, we assume $p\in \partial D$.

Arguing by contradiction, we assume that there exists a sequence $\{z_k\}\subset D$ converging to $\partial D$ such that $\lim_{k\to \infty}F(z_k)=w_0\in G$. By Step 2, it follows that necessarily $\{z_k\}$ converges to $p$. Moreover, we can assume that $\lim_{k\to \infty}\frac{p-z_k}{|p-z_k|}=v$ exists. By Step 3, $v\in T_p^\C\partial D$, that is, $\langle v, \nu_p\rangle =0$.

Now, we make use of a suitable scaling in $D$ and in $G$. 

To prepare the scaling in $D$,  we can find a local biholomorphism $\Phi$ defined in an open neighborhood $V$ of $p$ such that $\Phi(V\cap D)\subset \B^n$, $e_1=(1,0,\ldots, 0)\in \partial \Phi(V\cap D)$  and the defining function of $\Phi(V\cap D)$ at $e_1$ is given by
\[
r(z)=-1+|z|^2+o(|z-e_1|^2).
\]
We can assume that $d\Phi_p(\nu_p)=\lambda e_1$ for some $\lambda>0$.

Note that by \cite[Corollary 1]{H}, if $\varphi\in \mathcal F$ (and we can assume $\varphi(1)=p$), then there exists $\epsilon>0$ such that if $|\varphi'(1)_N|<\epsilon |\varphi'(1)_T|$ (where, for $v\in \C^n$, $v_N$ denotes the orthogonal projection of $v$ on $\C\nu_p$ and $v_T$ denotes the tangential component of $v$ at $T_p^\C\partial D$ in the orthogonal splitting $v=v_N+v_T$) then $\varphi(\oD)\subset V$. Therefore, there is an open set $\tilde{\mathcal F}^T$ of complex geodesics of $\mathcal F$ which are contained in $D\cap V$. Note that, since each $\varphi\in \tilde{\mathcal F}^T$ has an associated holomorphic retraction $\rho:D\to \varphi(\D)$, it follows that $\rho|_{V\cap D}:V\cap D\to \varphi(\D)$ is a holomorphic retraction of $V\cap D$ onto $\varphi(\D)$, and it follows easily that $\varphi$ is then a complex geodesic of $V\cap D$. Thus, $\Phi \circ \varphi$ is a complex geodesic in $V\cap D$. We let $\mathcal F^T:=\Phi(\tilde{\mathcal F}^T)$.

Also, note that the sequence $\{z_k\}$ is eventually contained in $V$ (and we assume it is contained in $V$ for every $k$) and $\{z_k':=\Phi(z_k)\}$ converges complex tangentially to $e_1$, that is, $\langle \frac{e_1-z_k'}{|e_1-z_k'|}, e_1\rangle\to 0$ as $k\to \infty$. 

Moreover, if $\tilde\varphi_k\in \mathcal F$ are such that $\tilde\varphi_k(1)=p$ and $z_k\in \tilde\varphi_k(\D)$ it follows that $\tilde\varphi_k\in \tilde{\mathcal F^T}$ eventually (and we can assume that they are contained in that set for all $k$) and then, for every $k$,  $\varphi_k:=\Phi\circ \tilde\varphi_k\in \mathcal F^T$ is a complex geodesic in $\Phi(V\cap D)$ which extends smoothly up to the boundary, $\varphi_k(1)=e_1$ and 
\[
\frac{\varphi_k'(1)}{|\varphi_k'(1)|}=\alpha_k e_1+v_k,
\]
for some $\alpha_k>0$ and $|v_k|=1$ such that $\langle v_k, e_1\rangle= 0$ and $\alpha_k\to 0$ as $k\to 0$. We can assume that $v_k\to v_0$ as $k\to \infty$ and $\alpha_k$ is strictly decreasing.

We consider the following (hyperbolic) automorphisms of $\B^n$ (here $(z_1,z')\in \C\times \C^{n-1}$) for $t\in (-1,1)$:
\[
A_t(z_1,z')=\left(\frac{z_1-t}{1-tz_1}, \sqrt{1-t^2}\frac{z'}{1-tz_1}\right)
\]
Note that $-e_1, e_1$ are fixed by $A_t$ for all $t$ and $A_t(t,0)=(0,0)$. We have
\begin{equation}\label{diff}
d(A_t)_{e_1}=\left(\begin{matrix}\frac{1+t}{1-t} & 0\\
0 & \sqrt{\frac{1+t}{1-t}} \end{matrix}\right).
\end{equation}
One can easily see  that $A_t(\Phi(V\cap D))$ converges in the Hausdorff topology to $\B^n$ as $t\to 1$ (or, see, \cite{Fridman}). Moreover, consider the defining functions of $A_t(\Phi(V\cap D))$  given by  
\[
r_t(z):= \frac{|1-tz_1|^2}{1-t^2}r(A^{-1}_t(z)).
\]
It is shown in \cite[pagg. 467-468]{Lem 1981}  that $r_t$ converges in the $C^6$-topology to 
$-1+|z|^2$ on $\{z\in \C^n: \Re z_1\geq -1/2, |z|\leq 2\}$ as $t\to 1$. 
\smallskip

{\sl Claim A}. For every complex geodesic $\eta$ of $\B^n$ such that $\eta(\D)\subset \{\Re z_1>0\}$ and $\eta(1)=e_1$ there exist  a family $\{\eta_t\}\subset \mathcal F^T$ (suitably parametrized)  (defined for $t$ close to $1$) such that $A_t\circ \eta_t$ converges uniformly in the $C^1$-topology on $\oD$ to $\eta$ as $t\to 1$.

\smallskip

For of all, it follows  from \cite[Theorem 2]{Cha-Hu-Lee 1988} that for every $a>0$, $w\in T_p^\C\partial D$ one can find a complex geodesic $\varphi\in\mathcal F$ such that $\varphi'(1)=a\nu_p+w'$. Note that $d\Phi_p(a\nu_p+w')=(\lambda a, Bw')$ for some invertible $(n-1)\times (n-1)$ matrix $B$. 

Now, let $\eta$ be a complex geodesic of $\B^n$ such that $\eta(\D)\subset \{\Re z_1>0\}$ and $\eta(1)=e_1$ and $\eta'(1)= (\alpha, v')$, for some $\alpha>0$, $v'\in \C^{n-1}\setminus\{0\}$. Let $a=\alpha/\lambda$ and $w'=B^{-1}v'$. Let $u_t:=\frac{1-t}{1+t} a\nu_p+ \sqrt{\frac{1-t}{1+t}} w'$. Note that, $\frac{u_t}{|u_t|}$ converges to a vector in $T_p^\C\partial D$. Hence, for $t$ close to $1$ we can find $\tilde\eta_t\in \tilde{\mathcal F}^T$ such that $\tilde\eta_t'(1)=u_t$. Let $\eta_t:=\Phi\circ \tilde\eta_t\in \mathcal F^T$. By \eqref{diff}, $(A_t\circ \eta_t)'(1)=(\alpha, v')$ for every $t$ close to $1$.

Using \cite[Proposition 5]{Cha-Hu-Lee 1988} and reparametrizing if necessary, we see that for every $t$ close to $1$ we can find stationary discs $\eta^\ast_t$ attached to $(A_t(\Phi(\partial D\cap V)))\cap\{\Re z_1>-1/4\}$ such that $\eta_t^\ast(1)=e_1$, $(\eta_t^\ast)'(1)= (a, v')$ and $\{\eta_t^\ast\}$ converges in the $C^1$-topology of $\oD$ to $\eta$. 

Therefore, $h_t:=\Phi^{-1}\circ A_t^{-1}\circ \eta^\ast_t$ is a stationary disc in $D$ such that $h_t(1)=p$ and $h_t'(1)=u_t$. Since, by \cite{Lem 1981}, stationary discs are complex geodesics in $D$, by the uniqueness of complex geodesics (\cite[Theorem 2]{Cha-Hu-Lee 1988}) it follows that $h_t=\tilde\eta_t\circ \theta_t$ for some (parabolic) automorphism $\theta_t$ of $\D$ fixing $1$, and the claim follows.
\smallskip

{\sl Claim B}. For every $R\in (0,1)$  there exists an increasing sequence $\{t_k\}$ converging to $1$ such that, up to reparametrization, $\{A_{t_k}\circ \varphi_k\}$ converges uniformly in the $C^1$-topology on $\oD$ to a complex geodesic $\eta_0$ of $\B^n$ such that $\eta(\oD)\cap B(0,R)=\emptyset$ (here $B(0,R):=\{z\in \C^n: |z|<R\}$).

\smallskip

Let $a\in (0,1)$. Recall that $\varphi_k'(1)=(\alpha_k, v_k)$. Since $\alpha_k$ is strictly decreasing and converges to $0$, we can find a sequence $\{t_k\}$ increasing and converging to $1$ such that 
\[
\sqrt{\frac{1+t_k}{1-t_k}}\alpha_k=a.
\]
Pre-composing with a hyperbolic automorphism of $\D$ fixing $1$, we can parametrize $\varphi_k$ in such a way that $\varphi_k'(1)=\sqrt{\frac{1-t_k}{1+t_k}}(\alpha_k, v_k)$. Thus, $(A_{t_k}\circ \varphi_k)'(1)=(a, v_k)$. 

Arguing as in the proof of Claim B, we see that, if $a$ is sufficiently small so that the complex geodesic $\eta$ of $\B^n$ such that $\eta(1)=e_1$ and $\eta'(1)=(a,v)$ is contained in $\{\Re z_1\geq 0\}$ then
$\{A_{t_k}\circ \varphi_k\}$ converges (possibly up to a further reparametrization) in the $C^1$-topology on $\oD$ to $\eta$. Taking $a$ small (depending on $R$) we have the claim.
\smallskip

{\sl Claim C}. $\{F\circ \tilde\varphi_k\}$ converges (up to subsequences and reparametrization) to a complex geodesic of $G$.

\smallskip

Since  the Euclidean diameter of $\{F\circ \tilde\varphi_k\}$ is uniformly bounded from below from zero because $F(z_k)\to w_0\in G$, the claim follows by the same arguments as above.

\smallskip

{\sl Claim D}. Let $r_0>0$ be such that $[r_0,1)e_1\subset D\cap V$. Then there exists $M>0$ such that for every $t\in [r_0, 1)$ and $k\in \N$ there exists $\xi_{t,k}\in (0,1)$ such that 
\[
K_G(F(\Phi^{-1}(te_1)), F(\tilde\varphi_k(\xi_{t,k})))\leq M.
\]

\smallskip

Note that the curve $\Phi^{-1}(te_1)$ converges non-tangentially to $p$ as $t\to 1$. By Step 3, the curve $\gamma(t):=F(\Phi^{-1}(te_1))$, $t\in [r_0,1)$, converges to $p'$ as $t\to 1$. Thus, by Lemma~\ref{Lem:control-dist}, we have only to prove that there exists a complex geodesic $\varphi_0$ of $G$ such that $\varphi_0(1)=p'$ and $\gamma(t)$ and $\varphi_0$ satisfy  \eqref{Eq:control-nontg}. To this aim, let $f\in \mathcal F$ be such that $f(1)=p$. By hypothesis, $\varphi_0:=F\circ f$ is a complex geodesic in $G$ and $F(f(1))=p'$ (again, by Step 3). Since $F$ does not increase the Kobayashi distance, it is enough to show that there exists $C>0$ such that for every $t\in [r_0, 1)$
\[
K_D(\Phi^{-1}(te_1), f([0,1))\leq C.
\]
Since $\Phi^{-1}(te_1)$ converges non-tangentially to $p$, this follows at once from Remark~\ref{cono2}.

\smallskip

Now, let $M>0$ be as in Claim D.  Fix $R\in (0,1)$ so that $\overline{B_{\B^n}^K(0,2M)}\subset \{|z|\leq R\}$ (here $B_{\B^n}^K(0,M)$ denotes the Kobayashi ball in $\B^n$ of center $0$ and radius $M$). Let $\{t_k\}$ be given by Claim $B$ and let $\xi_k:=\xi_{t_k,k}\in (0,1)$ be given by Claim D.  Finally, let $w_k:=F(\Phi^{-1}(t_ke_1))$. 

As in the proof of Lemma~\ref{non-dic-close-bd}, we can find univalent maps $\Psi_k:G\to \Psi_k(G)=:G_k$   such that $r_k \B^n\subset G_k\subset \B^n$, $\Psi_k(w_k) =0$, where $r_k>0$ and $r_k\to 1$ as $n\to \infty$.

Let $D_k:=A_{t_k}(\Phi(D\cap V))$ and $F_k:=\Psi_k \circ F\circ \Phi^{-1}\circ A_{t_k}^{-1}:D_k \to G_k$.  Note that $F_k(0)=0$. Therefore, a Montel's argument implies that, up to subsequences, $\{F_n\}$ converges on compacta to a holomorphic map $H:\B^n\to \B^n$.

Arguing as in  Lemma~\ref{non-dic-close-bd} and by Claim A,  we see that $H$ acts as an isometry for the Kobayashi distance on every complex geodesic $\eta$ of $\B^n$ such that $\eta(\D)\subset \{\Re z_1>0\}$ and $\eta(1)=e_1$. Since clearly the closure of the union of  such complex geodesics contains an open subset of $\partial \B^n$, it follows from Theorem~\ref{main-ball} that $H$ is an automorphism of $\B^n$ and $H(0)=0$. In particular, $H(B_{\B^n}^K(0, T))=B_{\B^n}^K(0, T)$ for all $T>0$. 

By Claim $B$, the complex geodesics $\{A_{t_k}\circ \varphi_k\}$ converges (in the $C^1$-topology of $\oD$) to the complex geodesic $\eta_0$ of $\B^n$ which, by our choice of $R$, is not contained in $\overline{B_{\B^n}^K(0,2M)}$. On the other hand, by Claim $D$, the sequence of complex geodesics $\{F_k\circ A_{t_k}\circ \varphi_k\}$ of $G_k$ converges (up to extracting subsequences and reparametrizan) to a complex geodesic $\eta_1$ of $\B^n$ which intersects $B_{\B^n}^K(0,M)$ (since $K_{G_k}$ converges uniformly on compacta to $K_{\B^n}$). 

Now, let $z\in \eta_0(\D)$. We can find a sequence $z_k\in A_{t_k}(\varphi_k(\D))$ such that $z_k\to z$. Since $\{F_k\}$ converges to $H$ on compacta of $\B^n$, it follows that $H(z)=\lim_{k\to \infty}F_k(z_k)$. This implies also that $\{F_k(z_k)\}$ is contained in a compact subset $Q$ of $\B^n$. Moreover, by construction, $F_k(z_k)\in \Psi_k(F(\tilde\varphi_k(\D))$, and, by Claim  C, $\Psi_k(F(\tilde\varphi_k(\D))\cap Q$ converges (in the Hausdorff sense) to $\eta_1(\D)\cap Q$. Therefore, $H(z)\in \eta_1(\D)$, and, by the arbitrariness of $z$, we have  $H(\eta_0(\D))=\eta_1(\D)$. 

Summing up, we have proved that  there exists $z\in \eta_0(\D)\cap B_{\B^n}^K(0, M)$ such that $H(z)\in \eta_1(\D)\cap B_{\B^n}^K(0, M)=\emptyset$, a contradiction and Step 4 follows.

\medskip

{\sl Step 5.} If $p\in D$ then $F$ is a biholomorphism.
\smallskip

By Step 4, $F$ is proper and hence, according to \cite[Theorem 1]{DF},  $F$ is  a biholomorphism. 
\end{proof}

\end{document}